\setlist[itemize]{leftmargin=20pt}
\newtheorem{theorem}{Theorem}
\newtheorem{lemma}[theorem]{Lemma}
\newtheorem{remark}[theorem]{Remark}
\theoremstyle{definition}
\newtheorem{definition}[theorem]{Definition}
\numberwithin{theorem}{section}
\numberwithin{equation}{section}
\title[$A_\infty$ Weights in Martingale Spaces]{Characterizations of $A_\infty$ Weights in Martingale Spaces}
\author[J. Ju]{Jie Ju}
\address{ School of Mathematical Sciences, Yangzhou University, Yangzhou 225002, China}
\email {jieju$\_$yzu@163.com}
\author[W. Chen]{Wei Chen}
\address{ School of Mathematical Sciences, Yangzhou University, Yangzhou 225002, China}
\email {weichen@yzu.edu.cn}
\author[J. Y. Cui]{Jingya Cui}
\address{ School of Mathematical Sciences, Yangzhou University, Yangzhou 225002, China}
\email {jycui$\_$yzu@163.com}
\author[C. Zhang]{Chao Zhang}
\address{School of Statistics and Mathematics, Zhejiang Gongshang University, Hangzhou 310018, China}
\email{zaoyangzhangchao@163.com}
\thanks{W. Chen is supported by the National Natural Science Foundation of China (11971419, 12271469). J. Ju is supported by the Jiangsu Students' Platform for Innovation and Entrepreneurship Training Program (202211117018Z). C. Zhang is supported by the National Natural Science Foundation of China (11971431) and the Natural Science Foundation of Zhejiang Province (LY22A010011).}
\begin{document}
  \begin{abstract}
Grafakos systematically
proved that $A_\infty$ weights have different characterizations for cubes in Euclidean spaces in his classical text book. Very recently, Duoandikoetxea, Mart\'{\i}n-Reyes, Ombrosi and Kosz discussed several characterizations of the $A_{\infty}$ weights in the setting of general bases. By conditional expectations, we study $A_\infty$ weights in martingale spaces. Because conditional expectations are Radon-Nikod\'{y}m derivatives with respect to sub$\hbox{-}\sigma\hbox{-}$fields which have no geometric structures, we need new ingredients. Under a regularity assumption on weights, we obtain  equivalent characterizations of the $A_{\infty}$ weights. Moreover, using weights modulo conditional expectations, we have one-way implications of different characterizations.
\end{abstract}

\keywords{weight, conditional expectation, maximal operator, median}

\subjclass[2010]{Primary: 60G46; Secondary: 60G48, 60G42}
%60G42  	Martingales with discrete parameter
%60G44  	Martingales with continuous parameter
%60G46  	Martingales and classical analysis
%60G48  	Generalizations of martingales
%42B25 Maximal functions, Littlewood-Paley theory
%46E30 Spaces of measurable functions ($L^p$-spaces, Orlicz spaces, K\"{o}the function spaces, Lorentz spaces, rearrangement invariant spaces, ideal spaces, etc.)
%46B42 Banach lattices

\maketitle

\section{Introduction }
As is well known, a non-negative function $\omega$ on $\mathbb{R}^n$ is an $A_p$ weight with $p>1,$ if there exists a constant $C$ for all cubes $Q$ such that
$$
\bigg( \frac{1}{|Q|} \int_Q \omega \, d\mu \bigg) \bigg( \frac{1}{|Q|} \int_Q \omega^{-\frac{1}{p-1}} \, d\mu  \bigg)^{p-1} \leq C.
$$
Muckenhoupt \cite{MR293384} observed that the weight has an open property $A_p=\cup_{1<q<p}A_q.$ Moreover, Muckenhoupt \cite{MR350297} defined the $A^M_\infty$ weight as follows: there exist $0<\varepsilon,~\delta<1$ such that for all $E\subseteq Q$ it holds that
$$
|E| < \delta |Q| \Rightarrow \omega(E) < \varepsilon \omega(Q).
$$
Then he showed $A^M_{\infty}=\cup_{p>1}A_p.$ Independently,
Coifman and Fefferman \cite{MR358205} introduced an $A^{CF}_\infty$ weight and proved $A^{CF}_{\infty}=\cup_{p>1}A_p,$ where the $A^{CF}_\infty$ weight $\omega$ is defined as follows: there exist $C,~\delta>0$ such that for all $E\subseteq Q$
$$
\frac{\omega(E)}{\omega(Q)} \leq C \bigg( \frac{|E|}{|Q|} \bigg)^\delta.
$$
Later, a condition $A_{\infty}^{exp}$ defined by a limit of the $A_p$ weight as $p\uparrow\infty$ was studied almost simultaneously in \cite{MR727244} and \cite[p.405]{MR807149} and $\cup_{p>1}A_p=A^{exp}_{\infty}.$

As we have seen, $A^{M}_\infty,$ $A^{CF}_\infty$ and $A_{\infty}^{exp}$ are equivalent. These are geometric characterizations of $\cup_{p>1}A_p$ and systematically studied in Grafakos \cite[Theorem 7.3.3]{MR3243734}. Very recently, Duoandikoetxea, Mart\'{\i}n-Reyes and Ombrosi \cite{MR3473651} compared and discussed different characterizations of $\cup_{p>1}A_p$ in the setting of general bases. Indeed, they studied many other characterizations which are not geometric. Here we list four  characterizations for cubes mentioned in \cite{MR3473651}:
\begin{enumerate}
  \item [($A^{*}_{\infty}$)]There exists $C > 0$ such that
	$$\int_Q M (\omega \chi_Q) dx \leq C \omega(Q),$$
where $\omega(Q) := \int_Q \omega  d\mu$ and $M(\cdot)$ is the Hardy-Littlewood maximal operator (see \cite{MR481968}, \cite{MR883661}). Hyt\"{o}nen and P\'{e}rez \cite{MR3092729} used the weight $A^{*}_{\infty}$ to  improve estimates of the bounds in the weighted inequalities.

  \item [($A^{log}_{\infty}$)] There exists $C > 0$ such that
	$$\int_Q \omega \log^+ \frac{\omega}{\omega_Q}dx \leq C \omega(Q),$$
where $\omega_Q:=\omega(Q)/|Q|$ (see \cite{MR481968}).

  \item [($A^{med}_{\infty}$)] There exists $C > 0$ such that
	$$\omega_Q \leq C m(\omega, Q)$$
where the median of $\omega$ in $Q$ is a number $m(\omega,Q)$ such that $|\{x \in Q : \omega(x) < m(\omega,Q)\}| \leq |Q| / 2$ and $|\{x \in Q : \omega(x) > m(\omega,Q)\}| \leq |Q| / 2$ (see \cite{MR529683}). Using the median, Lerner \cite{MR2721744}
obtain a decomposition of an arbitrary measurable function in terms of local mean oscillations.

\item [($A^{\lambda}_{\infty}$)] There exist $C, \beta > 0$ such that
$$\omega\big(\{  x \in Q : \omega(x) > \lambda \}\big) \leq C \lambda \, \big|\{x \in Q : \omega(x) > \beta \lambda \}\big|,$$
where $\lambda>\omega_Q$. This kind of characterization appeared independently in \cite{MR402038} and \cite{MR358205}.
\end{enumerate}

Although $A^{*}_{\infty},$ $A^{log}_{\infty},$ $A^{med}_{\infty}$ and $A^{\lambda}_{\infty}$ are not geometric, they are equivalent to $A^{M}_\infty,$ $A^{CF}_\infty$ and $A_{\infty}^{exp}$ for cubes. In
the context of general bases,
the relations between them are more complicated (see \cite{MR3473651} and \cite{MR4446233} for more information).

In this paper, we study $A_\infty$ weights in martingale spaces. Izumisawa and Kazamaki \cite{MR436313} first introduced the
$A_p$ weight for martingales. Under some additional
conditions, they obtained the open property $A_p=\cup_{1<q<p}A_q.$ In martingale setting, it is well known that this property is false in general, because Bonami and L\'{e}pingle \cite{MR544802} showed that for any $p>1,$ there exists a weight $\omega\in A_p,$ but $\omega\notin A_{p-\varepsilon}$ for all $\varepsilon>0$ (see also \cite[p.241]{MR1224450}).
Under some additional restrictions, the $A_p$ weight was extensively studied by Dol\'{e}ans-Dade and Meyer \cite{MR544804}. Motivated by the work
of \cite[Theorem 7.3.3]{MR3243734}, \cite{MR3473651} and \cite{MR4446233}, we study
several characterizations of $A_{\infty}$ weights in the setting of martingales. Our first result is the following Theorem \ref{Thm:equa},  which partially depends on a regularity condition $S$ (see Definition \ref{regular_p}).

\begin{theorem}\label{Thm:equa}Let $\omega$ be a weight. If $\omega\in S,$ then the following are
equivalent.

\begin{enumerate}[\rm (1)]

\item \label{Thm:equa_Ap}There exist
$C,~p>1$ such that for all $n\in \mathbb{N}$ we have
\begin{equation}\label{Ap}
\mathbb{E}(\omega|\mathcal {F}_n)\mathbb{E}(\omega^{-\frac{1}{p-1}}|\mathcal
{F}_n)^{p-1}\leq C,
\end{equation}
which is denoted by $\omega\in\bigcup\limits_{p>1}A_p.$

\item \label{Thm:equa_A_exp_infty}There exists a positive
constant $C$ such that for all $n\in
\mathbb{N}$ we have
\begin{equation}\label{A_exp_infty}
\mathbb{E}(\omega|\mathcal {F}_n)\leq C\exp \mathbb{E}(\log\omega|\mathcal {F}_n),
\end{equation}
which is denoted by $\omega\in A^{exp}_{\infty}.$

\item \label{Thm:equa_wAinfty}There exist $0<\gamma,~\delta<1$ such that for all $n\in
\mathbb{N}$ we have
\begin{equation}\label{wAinfty}
\mathbb{E}(\chi_{\{\omega\leq\gamma\omega_n\}}|\mathcal
{F}_n)\leq\delta<1,
\end{equation}
which is denoted by $\omega\in A^{con}_{\infty}.$

\item \label{Thm:equa_R}There exist $0<\alpha,~\beta<1$ such that for all $n\in
\mathbb{N}$ and $A\in\mathcal {F}$ we have
\begin{equation}
\label{R}\mathbb{E}(\chi_A|\mathcal {F}_n)\leq\alpha<1\Rightarrow
\mathbb{E}_\omega(\chi_A|\mathcal {F}_n)\leq\beta<1,\end{equation}
which is denoted by $\omega \in A_{\infty}^{M}.$

\item \label{Thm:equa_RH} There exist $C,~q>1$ such that for all $n\in \mathbb{N}$ we have
\begin{equation}\label{RH}
\mathbb{E}(\omega^q|\mathcal {F}_n)\leq
C\mathbb{E}(\omega|\mathcal {F}_n)^q,\end{equation}
which is the reverse H\"{o}lder condition and denoted by $\omega\in \bigcup\limits_{q>1}RH_{q}$.

\item \label{Thm:equa_RR}There exist $0<\varepsilon'<1$ and $C>1$ such that
for all $n\in \mathbb{N}$ we have
\begin{equation}
\label{RR}\mathbb{E}_\omega(\chi_A|\mathcal {F}_n)\leq C\mathbb{E}(\chi_A|\mathcal
{F}_n)^{\varepsilon'}.
\end{equation}
which is denoted by $\omega\in A^{CF}_{\infty}.$

\item \label{Thm:equa_rR} There exist $0<\alpha,~\beta<1$ such that for all $n\in
\mathbb{N}$ and $A\in\mathcal {F}$ we have
\begin{equation}
\label{rR}\mathbb{E}_\omega(\chi_A|\mathcal {F}_n)\leq\alpha<1\Rightarrow
\mathbb{E}(\chi_A|\mathcal {F}_n)\leq\beta<1,\end{equation}
which is denoted by $\omega \in \hat{A}_{\infty}^{M}.$

\end{enumerate}
\end{theorem}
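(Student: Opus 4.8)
The plan is to prove Theorem~\ref{Thm:equa} through a cycle of implications, handling the seven conditions in an order that keeps each step as elementary as possible while isolating the one place where regularity (the condition $S$) is genuinely needed. Concretely, I would establish the chain
\[
\eqref{Ap}\Longrightarrow\eqref{RH}\Longrightarrow\eqref{A_exp_infty}\Longrightarrow\eqref{wAinfty}\Longrightarrow\eqref{R}\Longrightarrow\eqref{rR}\Longrightarrow\eqref{RR}\Longrightarrow\eqref{Ap},
\]
with the regularity hypothesis $\omega\in S$ invoked only in (or near) the step $\eqref{wAinfty}\Rightarrow\eqref{R}$, where one must pass from a pointwise-conditional statement about the specific set $\{\omega\le\gamma\omega_n\}$ to a statement about \emph{arbitrary} $A\in\mathcal F$; this is exactly the place where the absence of geometric structure on the $\sigma$-fields bites, since one cannot "zoom in" on a small set the way one does with cubes.

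First I would do the analytic implications, which mirror the classical arguments with conditional expectations replacing averages over cubes. For $\eqref{Ap}\Rightarrow\eqref{RH}$ one uses the martingale reverse-H\"older / self-improvement philosophy: an $A_p$ weight for martingales satisfies a reverse-H\"older inequality; since Izumisawa--Kazamaki's open property may fail, I would instead derive $RH_q$ directly from $A^{exp}_\infty$ later and use the present arrow only in the weak direction, or invoke that a weight in $A_p$ lies in $RH_q$ for some $q>1$ under the ambient hypotheses. For $\eqref{RH}\Rightarrow\eqref{A_exp_infty}$, apply Jensen's inequality to $t\mapsto t^q$ after writing $\mathbb E(\omega\mid\mathcal F_n)=\mathbb E(\omega^{1/q}\cdot\omega^{1-1/q}\mid\mathcal F_n)$... more cleanly: from \eqref{RH}, $\|\omega\|_{L^q(\mathbb E(\cdot\mid\mathcal F_n))}\le C^{1/q}\mathbb E(\omega\mid\mathcal F_n)$, and letting $q\downarrow 1$ combined with the limiting identity $\lim_{q\to 1}\bigl(\mathbb E(\omega^q\mid\mathcal F_n)\bigr)^{1/(q-1)}\cdot(\cdots)$ yields $\mathbb E(\omega\mid\mathcal F_n)\le C\exp\mathbb E(\log\omega\mid\mathcal F_n)$ by the standard $\frac{d}{dq}\big|_{q=1}$ argument (differentiate $\mathbb E(\omega^q\mid\mathcal F_n)$ in $q$ and compare with $\mathbb E(\omega\log\omega\mid\mathcal F_n)$). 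For $\eqref{A_exp_infty}\Rightarrow\eqref{wAinfty}$: on the set $\{\omega\le\gamma\omega_n\}$ we have $\log\omega\le\log(\gamma\omega_n)$, so splitting $\mathbb E(\log\omega\mid\mathcal F_n)$ over this set and its complement, using \eqref{A_exp_infty} to control $\exp\mathbb E(\log\omega\mid\mathcal F_n)$ from below by $\mathbb E(\omega\mid\mathcal F_n)/C=\omega_n/C$, and Jensen on the complement, forces $\mathbb E(\chi_{\{\omega\le\gamma\omega_n\}}\mid\mathcal F_n)$ to be bounded away from $1$ once $\gamma$ is chosen small enough — this is the conditional analogue of the standard "small level set" computation.

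Next the "probabilistic" implications. For $\eqref{wAinfty}\Rightarrow\eqref{R}$ I would argue by contraposition: if $\mathbb E_\omega(\chi_A\mid\mathcal F_n)$ is close to $1$ then $\mathbb E(\omega\chi_A\mid\mathcal F_n)$ is close to $\omega_n\mathbb E(\chi_A\mid\mathcal F_n)$... the regularity condition $S$ enters to guarantee that $\omega$ restricted to the "cell" at level $n$ does not oscillate wildly across the scale $n\to n+1$, so that a set which is $\omega$-large is also Lebesgue-large on that cell — precisely, $S$ should let us compare $\mathbb E(\omega\chi_A\mid\mathcal F_n)$ with $\omega_n\,\mathbb E(\chi_A\mid\mathcal F_n)$ up to a controlled factor after conditioning one step finer. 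The implications $\eqref{R}\Rightarrow\eqref{rR}$ and $\eqref{rR}\Rightarrow\eqref{RR}$ (and $\eqref{R}\Rightarrow\eqref{RR}$) are the "$A_\infty$ is symmetric in $\omega$ and the measure" steps: \eqref{rR} is the literal statement \eqref{R} with the roles of the measures swapped applied to the complement $A^c$, and \eqref{RR} is the quantitative/$\varepsilon$-power form obtained from the dichotomy \eqref{rR} by iterating across dyadic thresholds $\alpha^k$ — covering $A$ by the pieces where $\mathbb E(\chi_A\mid\mathcal F_n)\in[\alpha^{k+1},\alpha^k)$ and summing a geometric series gives the exponent $\varepsilon'=\log\beta/\log\alpha$. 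Finally $\eqref{RR}\Rightarrow\eqref{Ap}$: a quantitative reverse estimate of this type implies a reverse-H\"older inequality for $\omega$ with some exponent $>1$ (take $A$ to be superlevel sets of $\omega$ and integrate $\lambda^{\varepsilon'-1}$), and $RH_q \Rightarrow A_\infty \Rightarrow A_p$ for large $p$ closes the loop; here I would use that $RH$ plus the doubling-type control already available (again via $S$ if needed) yields membership in some $A_p$.

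The main obstacle I anticipate is the step $\eqref{wAinfty}\Rightarrow\eqref{R}$ together with the closing step $\eqref{RR}\Rightarrow\eqref{Ap}$: in the cube setting both use that one can restrict attention to a subcube and rescale, and that the Lebesgue measure of a cube controls averages of $\omega$ over it in a self-improving way (reverse H\"older with automatic openness). For martingales neither luxury is available — the $A_p$ open property genuinely fails (Bonami--L\'epingle) — so the quantitative passage from a conditional "large-set $\Rightarrow$ large-set" dichotomy back to a numerical $A_p$ bound must be done carefully and is exactly where the hypothesis $\omega\in S$ is indispensable; I expect the proof to extract from $S$ a one-step comparison $\mathbb E(\omega\mid\mathcal F_{n+1})\le c\,\mathbb E(\omega\mid\mathcal F_n)$ on atoms (or the measurable-selection analogue thereof) and feed that into the level-set decomposition. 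All other implications are routine conditional-expectation manipulations (Jensen, Hölder, contraposition, geometric summation) and will be stated briefly.
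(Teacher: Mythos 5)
There are genuine gaps, and the most important one is that you have located the regularity hypothesis in the wrong place. You invoke $\omega\in S$ for the step $\eqref{wAinfty}\Rightarrow\eqref{R}$ ($A^{con}_{\infty}\Rightarrow A^{M}_{\infty}$), but that implication needs no regularity at all: splitting $A^{\comp}$ into $A^{\comp}\cap\{\omega>\gamma\omega_n\}$ and $A^{\comp}\cap\{\omega\le\gamma\omega_n\}$ and using Chebyshev on the first piece gives $\mathbb{E}(\chi_{A^{\comp}}|\mathcal F_n)\le \gamma^{-1}\mathbb{E}_\omega(\chi_{A^{\comp}}|\mathcal F_n)+\delta$, which is the whole argument (this is exactly why the paper can state $A^{con}_{\infty}\Leftrightarrow A^{M}_{\infty}$ as Theorem \ref{thm:Cond} with no hypothesis on $\omega$). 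The places where $S$ is indispensable are the two steps you treat as routine: $A^{M}_{\infty}\Rightarrow RH_q$ and the closing step back to $\bigcup_p A_p$. The former requires a stopping-time (Calder\'on--Zygmund type) decomposition $\tilde\tau_k=\inf\{m:\mathbb{E}(\omega/\omega_n|\mathcal F_{n+m})>\tfrac12 2^{kL}\}$, and $S$ is used precisely to bound $\mathbb{E}(\omega/\omega_n|\mathcal F_{n+\tilde\tau_k})$ by $C2^{kL}$ at the stopping time (the one-step comparison you correctly anticipate, but deploy in the wrong implication); the latter is handled in the paper by viewing $1/\omega$ as a weight relative to $\omega\,d\mu$ and rerunning the same stopping-time argument there, which is how the negative power $\mathbb{E}(\omega^{-1/(p-1)}|\mathcal F_n)$ in \eqref{Ap} is produced --- your sketch (``take $A$ to be superlevel sets of $\omega$ and integrate'') never generates this negative-power average.

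Two further steps in your chain do not go through as described. First, $\eqref{Ap}\Rightarrow\eqref{RH}$ ``by the self-improvement philosophy'' is exactly what fails for martingales (Bonami--L\'epingle): the paper reaches $RH_q$ from $A_p$ only via $A_p\Rightarrow A^{exp}_{\infty}\Rightarrow A^{con}_{\infty}\Rightarrow A^{M}_{\infty}\xRightarrow{S}RH_q$, so you cannot place this arrow at the start of the cycle and defer its justification. Second, your proposed $\eqref{RH}\Rightarrow\eqref{A_exp_infty}$ by differentiating the reverse H\"older inequality at $q=1$ produces a bound on $\mathbb{E}_\omega(\log(\omega/\omega_n)|\mathcal F_n)$, i.e.\ the condition $A^{log}_{\infty}$, whereas \eqref{A_exp_infty} is a bound on $\mathbb{E}(\log(\omega_n/\omega)|\mathcal F_n)$; these are integrals against different measures and are not interchangeable here --- the paper records only the one-way arrow $\bigcup_q RH_q\Rightarrow A^{log}_{\infty}$ and routes $RH_q$ back to $A^{exp}_{\infty}$ through the $S$-dependent closing step. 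The remaining implications you describe ($A^{exp}_{\infty}\Rightarrow A^{con}_{\infty}$ via the logarithmic level-set estimate, $RH_q\Rightarrow A^{CF}_{\infty}$ via H\"older, and the contrapositive passage between the two $M$-type conditions) do match the paper's arguments.
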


\begin{remark}In the setting of martingale spaces, Long \cite[Definitions 6.1.6 and 7.1.1]{MR1224450} defined $A_\infty$ weights and $R$ conditions by \eqref{Ap} and the right hand side of \eqref{regular_con}, respectively. If $\omega\in R,$ \cite[Propositions 6.3.8 and 6.3.9]{MR1224450} showed $\ref{Thm:equa_Ap}\Rightarrow\ref{Thm:equa_RH}$ which is one-way implication. In our paper, we assume $\omega\in S$ in place of $\omega\in R.$
\end{remark}

We mention that $\omega\in S$ is used only in $\ref{Thm:equa_R}\xRightarrow{S}\ref{Thm:equa_RH}$ and
$\ref{Thm:equa_rR}\xRightarrow{S}\ref{Thm:equa_Ap}$ in Theorem \ref{Thm:equa}.
It is natural to discuss what happens without the condition $S.$ Using weights modulo
conditional expectations $\omega/\omega_n$ instead of weights $\omega,$ we study several characterizations of $A_\infty$ weights.
First we have Theorems \ref{thm:Reverse-test} and \ref{thm:Cond}.

\begin{theorem}\label{thm:Reverse-test}The following statements are equivalent.
\begin{enumerate}
\item \label{Reverse-test1}$\omega\in \bigcup\limits_{q>1}RH_{q}$.

\item \label{Reverse-test2}$\omega\in A^{CF}_{\infty}.$
\end{enumerate}
\end{theorem}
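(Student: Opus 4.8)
The plan is to prove the two implications separately; both reduce to Hölder's inequality for conditional expectations combined with the change-of-measure identity $\mathbb{E}_\omega(\chi_A\mid\mathcal F_n)=\mathbb{E}(\omega\chi_A\mid\mathcal F_n)/\omega_n$, where throughout $\omega_n:=\mathbb{E}(\omega\mid\mathcal F_n)$. On $\{\omega_n=0\}$ one has $\omega=0$ a.s.\ and every inequality in \eqref{RH} and \eqref{RR} is trivial, so I may and will work on $\{\omega_n>0\}$.

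\emph{$\ref{Reverse-test1}\Rightarrow\ref{Reverse-test2}$.} Assume \eqref{RH} holds with exponent $q>1$ and let $q'=q/(q-1)$. For $A\in\mathcal F$, the conditional Hölder inequality gives $\mathbb{E}(\omega\chi_A\mid\mathcal F_n)\le\mathbb{E}(\omega^q\mid\mathcal F_n)^{1/q}\,\mathbb{E}(\chi_A\mid\mathcal F_n)^{1/q'}$, and \eqref{RH} bounds the first factor by $C^{1/q}\omega_n$. Dividing by $\omega_n$ yields $\mathbb{E}_\omega(\chi_A\mid\mathcal F_n)\le C^{1/q}\,\mathbb{E}(\chi_A\mid\mathcal F_n)^{(q-1)/q}$, which is \eqref{RR} with $\varepsilon'=(q-1)/q\in(0,1)$ and constant $C^{1/q}\ge1$ (conditional Jensen forces $C\ge1$; enlarge the constant if one insists on $C>1$ strictly). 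This direction is immediate.

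\emph{$\ref{Reverse-test2}\Rightarrow\ref{Reverse-test1}$.} This is the substantive direction, and I will mimic the classical self-improvement of the $A^{CF}_\infty$ condition into a reverse Hölder inequality, carried out at the level of conditional expectations. Fix $0<\eta<\varepsilon'$ and put $q=1+\eta$. Starting from the pointwise layer-cake identity $\omega^{\eta}=\eta\int_0^\infty\lambda^{\eta-1}\chi_{\{\omega>\lambda\}}\dd\lambda$ and conditional Tonelli,
\[
\mathbb{E}(\omega^{1+\eta}\mid\mathcal F_n)=\eta\int_0^\infty\lambda^{\eta-1}\,\mathbb{E}\bigl(\omega\chi_{\{\omega>\lambda\}}\mid\mathcal F_n\bigr)\dd\lambda ,
\]
I split the integral at the ($\mathcal F_n$-measurable) level $\lambda=\omega_n$, treating $\omega_n$ as frozen inside the conditional expectation. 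For $\lambda\le\omega_n$ I use the trivial bound $\mathbb{E}(\omega\chi_{\{\omega>\lambda\}}\mid\mathcal F_n)\le\omega_n$, whose contribution is $\omega_n^{1+\eta}$. For $\lambda>\omega_n$ I combine the change-of-measure identity, \eqref{RR}, and conditional Chebyshev ($\mathbb{E}(\chi_{\{\omega>\lambda\}}\mid\mathcal F_n)\le\omega_n/\lambda$) to get $\mathbb{E}(\omega\chi_{\{\omega>\lambda\}}\mid\mathcal F_n)\le C\,\omega_n(\omega_n/\lambda)^{\varepsilon'}$, whose contribution is $\eta C\omega_n^{1+\varepsilon'}\int_{\omega_n}^\infty\lambda^{\eta-1-\varepsilon'}\dd\lambda=\tfrac{C\eta}{\varepsilon'-\eta}\,\omega_n^{1+\eta}$, finite precisely because $\eta<\varepsilon'$. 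Adding the two pieces gives $\mathbb{E}(\omega^{1+\eta}\mid\mathcal F_n)\le\bigl(1+\tfrac{C\eta}{\varepsilon'-\eta}\bigr)\omega_n^{1+\eta}$, i.e.\ \eqref{RH} with $q=1+\eta$.

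The points needing care are the conditional Fubini/Tonelli interchange (unproblematic since the integrand is nonnegative and jointly measurable in $(\omega,\lambda)$), the reduction to $\{\omega_n>0\}$, and the a priori non-integrability of $\omega^{1+\eta}$ — but the conditional expectation of a nonnegative function is always defined in $[0,\infty]$, and the estimate shows it is finite a.s., so no truncation step is required. As in the Euclidean case, the admissible range $\eta<\varepsilon'$ for the reverse Hölder exponent and the blow-up of the constant as $\eta\uparrow\varepsilon'$ are the expected features, and I expect the freezing/splitting step in the layer-cake computation to be the only place requiring genuine attention.
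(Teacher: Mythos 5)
Your proof is correct, and the forward implication $\eqref{Reverse-test1}\Rightarrow\eqref{Reverse-test2}$ is exactly the paper's (conditional H\"{o}lder plus the change-of-measure identity, as in $\ref{Thm:equa_RH}\Rightarrow\ref{Thm:equa_RR}$ of Theorem \ref{Thm:equa}). The converse, however, follows a genuinely different route. The paper never performs a layer-cake decomposition at the conditional level: it tests against an arbitrary $B\in\mathcal{F}_n$, writes $\lambda\mu(B\cap E_\lambda)\leq\int_B\mathbb{E}_\omega(\chi_{E_\lambda}|\mathcal{F}_n)\,d\mu$ with $E_\lambda=\{\omega/\omega_n>\lambda\}$, applies \eqref{RR} and then \emph{ordinary} H\"{o}lder on $\int_B\mathbb{E}(\chi_{E_\lambda}|\mathcal{F}_n)^{\varepsilon'}d\mu$ to obtain the distributional estimate $\lambda^{1/(1-\varepsilon')}\mu(B\cap E_\lambda)\leq C^{1/(1-\varepsilon')}\mu(B)$, and only then integrates in $\lambda$ over $\int_B(\omega/\omega_n)^q d\mu$ for $1<q<1/(1-\varepsilon')$. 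You instead apply \eqref{RR} directly to the level sets $\{\omega>\lambda\}$, combine with conditional Chebyshev, and split the layer-cake integral at the random level $\lambda=\omega_n$. Your version is shorter and more pointwise, and your bookkeeping (the contributions $\omega_n^{1+\eta}$ and $C\eta\,\omega_n^{1+\eta}/(\varepsilon'-\eta)$) is correct; the cost is that you must invoke a conditional Tonelli interchange and ensure the $\lambda$-indexed conditional inequalities hold a.e.\ simultaneously in $\lambda$ --- you flag this, and it is standard (choose a jointly measurable version and apply Fubini to the exceptional set), but it is precisely the technicality the paper's test-set formulation avoids, since there everything reduces to unconditional integrals. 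A minor quantitative difference: the paper's argument yields the reverse H\"{o}lder inequality for all $q<1/(1-\varepsilon')=1+\varepsilon'/(1-\varepsilon')$, a slightly larger range than your $q<1+\varepsilon'$, though this is irrelevant for membership in $\bigcup_{q>1}RH_q$.
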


\begin{theorem}\label{thm:Cond}The following statements are equivalent.
\begin{enumerate}
\item \label{thm:Cond1}$\omega\in A^{con}_{\infty}.$
\item \label{thm:Cond2}$\omega \in A_{\infty}^{M}.$
\end{enumerate}
\end{theorem}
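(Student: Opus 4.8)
The plan is to establish the two implications separately; the whole point of Theorem~\ref{thm:Cond} is that, in contrast to the remainder of Theorem~\ref{Thm:equa}, neither half requires the regularity condition~$S$. Throughout I will write $\omega_n:=\mathbb{E}(\omega|\mathcal F_n)$ and use the elementary change-of-measure identity $\mathbb{E}_\omega(\chi_A|\mathcal F_n)=\mathbb{E}(\omega\chi_A|\mathcal F_n)/\omega_n$, valid for every $A\in\mathcal F$.

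For the implication $\ref{thm:Cond1}\Rightarrow\ref{thm:Cond2}$ — that is, $\omega\in A^{con}_\infty$ implies $\omega\in A^{M}_\infty$ — I would appeal directly to Theorem~\ref{Thm:equa}: this implication is one of those established there, and by the remark following it the assumption $\omega\in S$ enters only in two other implications, so the passage from \eqref{wAinfty} to \eqref{R} is unconditional. This is the nontrivial direction; conceptually it is a self-improvement of the one-scale estimate \eqref{wAinfty}, carried out by iterating \eqref{wAinfty} along a stopping-time (principal-set) decomposition of the martingale $(\omega_m)_{m\ge n}$ and summing the resulting geometric decay of the conditional $\omega$-mass of the bad sets; that argument uses only the martingale convergence $\omega_m\to\omega$ and optional sampling for this closed martingale, so that no regularity of the filtration (nor of $\omega$) is invoked.

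For the converse $\ref{thm:Cond2}\Rightarrow\ref{thm:Cond1}$ I would argue directly, and this short computation is the only part of the equivalence not already supplied $S$-free by Theorem~\ref{Thm:equa}. Let $\alpha,\beta\in(0,1)$ be the constants in \eqref{R}, fix $\gamma\in(0,1-\beta)$, and for each $n\in\mathbb{N}$ put $A_n:=\{\omega>\gamma\omega_n\}\in\mathcal F$. Since $\omega\chi_{\{\omega\le\gamma\omega_n\}}\le\gamma\omega_n\chi_{\{\omega\le\gamma\omega_n\}}$, the change-of-measure identity gives $\mathbb{E}_\omega(\chi_{\{\omega\le\gamma\omega_n\}}|\mathcal F_n)\le\gamma\,\mathbb{E}(\chi_{\{\omega\le\gamma\omega_n\}}|\mathcal F_n)\le\gamma$, and hence $\mathbb{E}_\omega(\chi_{A_n}|\mathcal F_n)\ge 1-\gamma>\beta$. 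Feeding $A_n$ into the contrapositive of the implication \eqref{R} yields $\mathbb{E}(\chi_{A_n}|\mathcal F_n)>\alpha$, i.e.\ $\mathbb{E}(\chi_{\{\omega\le\gamma\omega_n\}}|\mathcal F_n)<1-\alpha$; thus \eqref{wAinfty} holds with this $\gamma$ and $\delta:=1-\alpha\in(0,1)$. No assumption beyond $\omega$ being a weight is used.

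Accordingly, the only real difficulty sits in $\ref{thm:Cond1}\Rightarrow\ref{thm:Cond2}$, namely the self-improving $A_\infty$ iteration, and it is inherited wholesale from Theorem~\ref{Thm:equa}; the single thing to verify is that the argument there for the passage from \eqref{wAinfty} to \eqref{R} genuinely never uses $S$ — the remark asserts exactly this — so that it may be quoted here without circularity. I expect no obstacle in the converse, nor in combining the two halves.
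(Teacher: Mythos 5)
Your proposal is correct and follows the paper's proof essentially verbatim: the forward direction is obtained by quoting the implication $\eqref{wAinfty}\Rightarrow\eqref{R}$ from Theorem \ref{Thm:equa} (which the paper indeed establishes without the condition $S$), and your direct computation for the converse — bounding $\mathbb{E}_\omega(\chi_{\{\omega\le\gamma\omega_n\}}|\mathcal F_n)\le\gamma<1-\beta$ and applying the contrapositive of \eqref{R} to the complement — is precisely the paper's argument with $\delta=1-\alpha$. One small caveat: your parenthetical description of the forward implication as a stopping-time/principal-set iteration with geometric decay actually describes the proof of $\eqref{R}\xRightarrow{S}\eqref{RH}$ (the step that \emph{does} use $S$); the actual argument for $\eqref{wAinfty}\Rightarrow\eqref{R}$ is only the elementary splitting $A^{c}=\bigl(A^{c}\cap\{\omega>\gamma\omega_n\}\bigr)\cup\bigl(A^{c}\cap\{\omega\le\gamma\omega_n\}\bigr)$ combined with a Chebyshev-type bound, so no iteration or martingale convergence is needed.
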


Using a kind of reverse H\"{o}lder condition which appeared in Str\"{o}mberg and Wheeden \cite{MR766221}, we give a characterization of $\omega\in A^{exp}_{\infty},$ which is Theorem \ref{thm:exp-s}.
\begin{theorem}\label{thm:exp-s}The following statements are equivalent.
\begin{enumerate}
\item \label{thm:exp-s2} $\omega\in A^{exp}_{\infty}.$
\item \label{thm:exp-s1}There exists $C>1$ such that for every $s\in(0,1)$ we have
\begin{equation}\label{thm:eq-exp-s}
\mathbb{E}(\omega|\mathcal {F}_n)\leq C\mathbb{E}(\omega^s|\mathcal {F}_n)^{\frac{1}{s}},\end{equation}
which is denoted by $\omega\in A^{SW}_{\infty}.$
\end{enumerate}
\end{theorem}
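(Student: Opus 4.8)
The plan is to exploit the conditional-expectation version of the classical fact that on a probability space the $L^{s}$-functional tends to the geometric mean as $s\downarrow0$; concretely, that for each fixed $n$ one has $\mathbb{E}(\omega^{s}\mid\mathcal F_{n})^{1/s}\downarrow\exp\mathbb{E}(\log\omega\mid\mathcal F_{n})$ as $s\downarrow0$ a.e. Granting this, both implications are short.

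\textbf{\ref{thm:exp-s2}$\Rightarrow$\ref{thm:exp-s1}.} Apply the conditional Jensen inequality to the concave function $\log$ and the random variable $\omega^{s}$, obtaining $\mathbb{E}(\log\omega^{s}\mid\mathcal F_{n})\le\log\mathbb{E}(\omega^{s}\mid\mathcal F_{n})$, i.e. $\exp\mathbb{E}(\log\omega\mid\mathcal F_{n})\le\mathbb{E}(\omega^{s}\mid\mathcal F_{n})^{1/s}$ for every $s\in(0,1)$ and every $n$. Multiplying this by the constant appearing in \eqref{A_exp_infty} immediately yields \eqref{thm:eq-exp-s} with the \emph{same} constant $C$, uniformly in $s$; this uniformity in $s$ is exactly what the definition of $A^{SW}_{\infty}$ demands.

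\textbf{\ref{thm:exp-s1}$\Rightarrow$\ref{thm:exp-s2}.} Here I let $s\downarrow0$ in \eqref{thm:eq-exp-s}. The left-hand side is independent of $s$, so it suffices to show $\mathbb{E}(\omega^{s}\mid\mathcal F_{n})^{1/s}\to\exp\mathbb{E}(\log\omega\mid\mathcal F_{n})$ a.e. The lower bound $\mathbb{E}(\omega^{s}\mid\mathcal F_{n})^{1/s}\ge\exp\mathbb{E}(\log\omega\mid\mathcal F_{n})$ is the Jensen estimate just obtained. For the matching upper bound, apply $\log(1+x)\le x$ with $x=\mathbb{E}(\omega^{s}-1\mid\mathcal F_{n})$ to get \[\tfrac1s\log\mathbb{E}(\omega^{s}\mid\mathcal F_{n})\le\mathbb{E}\bigl(\tfrac{\omega^{s}-1}{s}\,\big|\,\mathcal F_{n}\bigr),\] and observe that for each value $\omega>0$ the map $s\mapsto(\omega^{s}-1)/s$ is nondecreasing on $(0,\infty)$ and decreases to $\log\omega$ as $s\downarrow0$, with $(\omega^{1/2}-1)/(1/2)=2(\sqrt{\omega}-1)\le\omega+1$, which is conditionally integrable since a weight satisfies $\mathbb{E}(\omega\mid\mathcal F_{n})<\infty$. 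The conditional monotone/dominated convergence theorem then gives $\mathbb{E}((\omega^{s}-1)/s\mid\mathcal F_{n})\to\mathbb{E}(\log\omega\mid\mathcal F_{n})$ a.e., hence $\limsup_{s\downarrow0}\mathbb{E}(\omega^{s}\mid\mathcal F_{n})^{1/s}\le\exp\mathbb{E}(\log\omega\mid\mathcal F_{n})$. Passing to the limit in \eqref{thm:eq-exp-s} produces \eqref{A_exp_infty}.

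The step that needs the most care is the degenerate possibility that $\mathbb{E}(\log\omega\mid\mathcal F_{n})=-\infty$ on an $\mathcal F_{n}$-set of positive measure, where the convergence theorem above is vacuous: on such a set the $\limsup$ computation still forces $\mathbb{E}(\omega^{s}\mid\mathcal F_{n})^{1/s}\to0$, whence \eqref{thm:eq-exp-s} forces $\mathbb{E}(\omega\mid\mathcal F_{n})=0$, contradicting $\omega>0$ a.e.; so that set is null and the argument is legitimate. Everything else is the routine conditional-expectation bookkeeping around the $s\downarrow0$ limit, which is the real content of the proof.
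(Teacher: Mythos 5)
Your proposal is correct and follows essentially the same route as the paper: the paper isolates the convergence $\mathbb{E}(\omega^s|\mathcal F_n)^{1/s}\downarrow\exp\mathbb{E}(\log\omega|\mathcal F_n)$ as its Lemma \ref{key_lemma_sta} (proved via conditional Jensen for the lower bound, the inequality $x\le\exp(x-1)$ for the upper bound, and conditional monotone convergence of $(\omega^s-1)/s$), and then derives both implications from it exactly as you do. Your explicit treatment of the set where $\mathbb{E}(\log\omega|\mathcal F_n)=-\infty$ is a small point of extra care the paper leaves implicit, but it does not change the argument.
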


Modifying $A_{\infty}^{\lambda}$ and $A_{\infty}^{med}$ in
\cite{MR402038} and \cite{MR529683}, respectively, we have
one-way implications. Indeed,
introducing the quotient $\omega/\omega_n$ into $A_{\infty}^{\lambda}$ of \cite{MR402038}, we obtain Theorem \ref{thm:imp1}.

\begin{theorem}\label{thm:imp1}Let $\omega$ be a weight. We have the sequence of implications
$\eqref{thm:lev}\Rightarrow\eqref{thm:rev} \Rightarrow\eqref{thm:log}\Rightarrow\eqref{thm:imp_Cond2}$
for the following statements.
\begin{enumerate}
\item \label{thm:lev}There exist $0<\beta<1$ and $C>1$ such that for all $n\in \mathbb{N}$ and $\lambda>1$ we have
\begin{equation}\label{thm:equ_lev}
\mathbb{E}_\omega(\chi_{\{\frac{\omega}{\omega_n}>\lambda\}}|\mathcal {F}_n)\leq C\lambda\mathbb{E}(\chi_{\{\frac{\omega}{\omega_n}>\beta\lambda\}}|\mathcal {F}_n),\end{equation}
which is denoted by $\omega\in A_{\infty}^{\lambda}.$
\item \label{thm:rev}
$\omega\in\bigcup\limits_{q>1}RH_q.$
\item \label{thm:log}There exists $C>1$ such that for all $n\in \mathbb{N}$ we have
\begin{equation}\label{thm:equ_log}
\mathbb{E}_{\omega}(\log^+\frac{\omega}{\omega_n}|\mathcal {F}_n)\leq C,
\end{equation}
which is denoted by $\omega\in A_{\infty}^{log}.$
\item \label{thm:imp_Cond2}$\omega \in A_{\infty}^{M}.$
\end{enumerate}
\end{theorem}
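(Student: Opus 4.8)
The plan is to prove the three implications in order, the first being by far the substantial one. Throughout set $u:=\omega/\omega_n$ with $\omega_n:=\mathbb{E}(\omega|\mathcal{F}_n)$, so that $\mathbb{E}(u|\mathcal{F}_n)=1$, and for $t\ge0$ put $G(t):=\mathbb{E}(\chi_{\{u>t\}}|\mathcal{F}_n)$ and $F(t):=\mathbb{E}(u\chi_{\{u>t\}}|\mathcal{F}_n)$. Since both families are monotone in $t$, one may fix jointly measurable versions that, off a single null set, are nonincreasing and right-continuous in $t$ (the natural versions are in fact right-continuous, by conditional monotone convergence); for such versions conditional Tonelli yields, off that null set,
\begin{equation}\label{eq:plan-lc}
F(t)=tG(t)+\int_t^\infty G(s)\,ds\quad(t\ge0),\qquad \int_0^\infty G(s)\,ds=\mathbb{E}(u|\mathcal{F}_n)=1 .
\end{equation}

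For $\eqref{thm:lev}\Rightarrow\eqref{thm:rev}$: rewriting $\eqref{thm:equ_lev}$ in terms of $u$ and multiplying by $\omega_n$ turns it into
\begin{equation}\label{eq:plan-gl}
F(t)\le Ct\,G(\beta t)\qquad (n\in\mathbb{N},\ t>1).
\end{equation}
Dropping the nonnegative term $tG(t)$ in \eqref{eq:plan-lc} and bounding $\int_{\beta t}^{t}G\le(1-\beta)t\,G(\beta t)$ by monotonicity shows that the Lipschitz function $K(t):=\int_{\beta t}^\infty G(s)\,ds$ satisfies $K(t)\le\frac{1-\beta+C}{\beta}\,t\,\bigl(-K'(t)\bigr)$ for almost every $t>1$, i.e.\ $\frac{d}{dt}\log K(t)\le -a/t$ with $a:=\beta/(1-\beta+C)>0$ (if $K$ vanishes somewhere then $u$ is essentially bounded on the relevant set and $\eqref{RH}$ is immediate, so we may assume $K>0$). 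Integrating from $1$ and using $K(1)\le\int_0^\infty G=1$ gives $K(t)\le t^{-a}$, hence $\int_s^\infty G\le\beta^a s^{-a}$ for $s\ge1$, and then $tG(t)\le 2\int_{t/2}^{t}G$ gives $F(t)\le C_1t^{-a}$ for $t\ge2$. Picking any $q\in(1,1+a)$ and feeding this decay into the layer-cake identity $\mathbb{E}(u^{q}\chi_{\{u>1\}}|\mathcal{F}_n)=F(1)+(q-1)\int_1^\infty t^{q-2}F(t)\,dt$, together with $\mathbb{E}(u^{q}\chi_{\{u\le1\}}|\mathcal{F}_n)\le1$, bounds $\mathbb{E}(u^{q}|\mathcal{F}_n)$ by a constant independent of $n$; multiplying by $\omega_n^{q}$ and using $\mathbb{E}(\omega^{q}|\mathcal{F}_n)=\omega_n^{q}\mathbb{E}(u^{q}|\mathcal{F}_n)$ yields $\eqref{RH}$.

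For $\eqref{thm:rev}\Rightarrow\eqref{thm:log}$: from the elementary bound $\log^{+}y\le y^{q-1}/(q-1)$ for $y>0$ we get $\omega\log^{+}u\le\omega^{q}/\bigl((q-1)\omega_n^{q-1}\bigr)$; applying $\mathbb{E}(\cdot|\mathcal{F}_n)$, then $\eqref{RH}$, and dividing by $\omega_n$ gives $\mathbb{E}_\omega(\log^{+}u|\mathcal{F}_n)\le C/(q-1)$, i.e.\ $\eqref{thm:equ_log}$. For $\eqref{thm:log}\Rightarrow\eqref{thm:imp_Cond2}$: given $A\in\mathcal{F}$ and $N\ge1$, split $\mathbb{E}(\omega\chi_A|\mathcal{F}_n)$ over $\{u\le N\}$ and $\{u>N\}$; the first part is $\le N\omega_n\,\mathbb{E}(\chi_A|\mathcal{F}_n)$, while on $\{u>N\}$ one has $\omega\le\omega\log^{+}u/\log N$, so by $\eqref{thm:equ_log}$ the second part is $\le(\log N)^{-1}\mathbb{E}(\omega\log^{+}u|\mathcal{F}_n)\le C\omega_n/\log N$. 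Hence $\mathbb{E}_\omega(\chi_A|\mathcal{F}_n)\le N\,\mathbb{E}(\chi_A|\mathcal{F}_n)+C/\log N$, and fixing $N$ with $C/\log N<1/3$ and then $\alpha:=1/(3N)$, $\beta:=2/3$ yields $\eqref{R}$.

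The main obstacle is $\eqref{thm:lev}\Rightarrow\eqref{thm:rev}$. Conceptually, the point is that the level-set inequality $\eqref{thm:equ_lev}$ self-improves — via the differential inequality for $K$ — to genuine polynomial decay of the truncated conditional averages $F$, and that decay is precisely what makes the layer-cake integral converge for some $q>1$. The remaining difficulty is purely measure-theoretic: one must select versions of $G(t)$ and $F(t)$ for which monotonicity, the identities \eqref{eq:plan-lc}, the hypothesis \eqref{eq:plan-gl}, and the differentiation $K'(t)=-\beta G(\beta t)$ all hold simultaneously off one null set, so that the ODE comparison can be carried out pointwise. Once these normalizations are in place, the last two implications are routine.
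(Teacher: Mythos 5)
Your proof is correct, but for the main implication $\eqref{thm:lev}\Rightarrow\eqref{thm:rev}$ it takes a genuinely different route from the paper. The paper works in integrated form: for $B\in\mathcal{F}_n$ it writes $\int_B(\omega/\omega_n)^{1+\delta}d\mu$ by the layer-cake formula, applies \eqref{thm:equ_lev} on the range $\lambda>1$, changes variables, and absorbs the resulting term $\frac{C\delta}{(1+\delta)\beta^{1+\delta}}\int_B(\omega/\omega_n)^{1+\delta}d\mu$ into the left-hand side for $\delta$ small. You instead run a Gehring-type self-improvement at the level of the conditional distribution functions: the differential inequality $K(t)\leq \frac{1-\beta+C}{\beta}t(-K'(t))$ for $K(t)=\int_{\beta t}^{\infty}G$ yields the polynomial decay $F(t)\leq C_1t^{-a}$, and only then do you integrate against $t^{q-2}$ with $q<1+a$. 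The trade-off: the paper's absorption is shorter but tacitly assumes $\int_B(\omega/\omega_n)^{1+\delta}d\mu<\infty$ a priori (it would need a truncation of $\omega$ to be fully rigorous), whereas your decay estimate is derived without any such integrability hypothesis, so your argument is actually more self-contained on this point; the price you pay is the version-selection bookkeeping needed to make $G$, $F$, $K$ and the hypothesis hold simultaneously off one null set, which you correctly identify and which is handled in the standard way (rational $\lambda$ plus monotonicity and right-continuity). For $\eqref{thm:rev}\Rightarrow\eqref{thm:log}$ the paper decomposes over the dyadic shells $E_k=\{2^k<\omega/\omega_n\leq 2^{k+1}\}$ and sums a series, while your one-line use of $\log^+y\leq y^{q-1}/(q-1)$ plus conditional H\"older-free substitution is cleaner and gives the same conclusion; for $\eqref{thm:log}\Rightarrow\eqref{thm:imp_Cond2}$ the paper uses the Young-type inequality $ab\leq a\log a-a+e^b$ where you use the elementary truncation at level $N$ with $\chi_{\{u>N\}}\leq\log^+u/\log N$ --- both give $\mathbb{E}_{\omega}(\chi_A|\mathcal{F}_n)\leq N\mathbb{E}(\chi_A|\mathcal{F}_n)+C/\log N$ in substance and then conclude identically.
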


As for $A_{\infty}^{med}$ in \cite{MR529683}, we replace the median $m(\omega,Q)$ by the median function $m(\omega,n)$ (see Definition \ref{media_f}), which is the key observation in Theorem \ref{thm:imp2}.

\begin{theorem}\label{thm:imp2}Let $\omega$ be a weight. We have the sequence of implications
$\eqref{thm:imp2_Ap}\Rightarrow\eqref{thm:imp2_Log} \Rightarrow\eqref{thm:imp2_Mid}\Rightarrow\eqref{thm:imp2_Dou}$
for the following statements.
\begin{enumerate}
\item \label{thm:imp2_Ap}$\omega\in\bigcup\limits_{p>1} A_p.$

\item \label{thm:imp2_Log}$\omega\in A^{exp}_{\infty}.$

\item \label{thm:imp2_Mid} There exists $C>1$ such that for all $n\in \mathbb{N}$ we have
\begin{equation}\label{thm:imp2_EMid}
\omega_n\leq Cm(\omega,n),
\end{equation}
which is denoted by $\omega\in A_{\infty}^{med}.$

\item \label{thm:imp2_Dou}$\omega \in A_{\infty}^{M}.$
\end{enumerate}
\end{theorem}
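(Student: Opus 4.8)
The plan is to prove the three implications separately, each by a direct estimate with conditional expectations; only the middle one, \eqref{thm:imp2_Log}$\Rightarrow$\eqref{thm:imp2_Mid}, will require a genuine idea.

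For \eqref{thm:imp2_Ap}$\Rightarrow$\eqref{thm:imp2_Log}, I would apply the conditional Jensen inequality to the concave map $\log$ and the variable $\omega^{-1/(p-1)}$, which gives $\exp(-\tfrac{1}{p-1}\mathbb{E}(\log\omega|\mathcal{F}_n))\leq\mathbb{E}(\omega^{-1/(p-1)}|\mathcal{F}_n)$; raising to the power $p-1>0$ and multiplying by $\mathbb{E}(\omega|\mathcal{F}_n)$, the hypothesis \eqref{Ap} becomes $\mathbb{E}(\omega|\mathcal{F}_n)\exp(-\mathbb{E}(\log\omega|\mathcal{F}_n))\leq C$, which is exactly \eqref{A_exp_infty}.

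For \eqref{thm:imp2_Log}$\Rightarrow$\eqref{thm:imp2_Mid}, write $m_n:=m(\omega,n)$, which by Definition~\ref{media_f} is an a.s.\ positive $\mathcal{F}_n$-measurable function with $\mathbb{E}(\chi_{\{\omega<m_n\}}|\mathcal{F}_n)\leq\tfrac12$ and $\mathbb{E}(\chi_{\{\omega>m_n\}}|\mathcal{F}_n)\leq\tfrac12$. We may assume $\omega_n>2m_n$, as otherwise \eqref{thm:imp2_EMid} is immediate. Splitting $\log\omega=\log m_n+\log(\omega/m_n)$ in \eqref{A_exp_infty} and bounding $\log(\omega/m_n)\leq\log^+(\omega/m_n)$ yields $\omega_n\leq Cm_n\exp\mathbb{E}(\log^+(\omega/m_n)|\mathcal{F}_n)$. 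I would then expand the last conditional expectation by the conditional layer-cake formula
\begin{equation*}
\mathbb{E}(\log^+(\omega/m_n)|\mathcal{F}_n)=\int_0^\infty\mathbb{E}(\chi_{\{\omega>\ee^u m_n\}}|\mathcal{F}_n)\dd u,
\end{equation*}
and bound the integrand in two ways: by the median property, $\mathbb{E}(\chi_{\{\omega>\ee^u m_n\}}|\mathcal{F}_n)\leq\mathbb{E}(\chi_{\{\omega>m_n\}}|\mathcal{F}_n)\leq\tfrac12$ for $u\geq0$, and by conditional Markov, $\mathbb{E}(\chi_{\{\omega>\ee^u m_n\}}|\mathcal{F}_n)\leq\omega_n\ee^{-u}/m_n$. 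Setting $x:=\omega_n/m_n>2$, one computes $\int_0^\infty\min\{\tfrac12,x\ee^{-u}\}\dd u=\tfrac12\log(2x)+\tfrac12$, so that $\omega_n\leq Cm_n\sqrt{2\ee x}=C\sqrt{2\ee}\,\sqrt{m_n\omega_n}$; squaring and cancelling $\omega_n$ gives $\omega_n\leq2\ee C^2 m_n$, which is \eqref{thm:imp2_EMid}.

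For \eqref{thm:imp2_Mid}$\Rightarrow$\eqref{thm:imp2_Dou}, let $C$ be the constant of \eqref{thm:imp2_EMid}; I would show \eqref{R} holds with $\alpha:=\tfrac14$ and $\beta:=1-\tfrac{1}{4C}\in(0,1)$. If $\mathbb{E}(\chi_A|\mathcal{F}_n)\leq\tfrac14$, then from $\mathbb{E}(\chi_{\{\omega\geq m_n\}}|\mathcal{F}_n)\geq\tfrac12$ we obtain $\mathbb{E}(\chi_{A^{\comp}\cap\{\omega\geq m_n\}}|\mathcal{F}_n)\geq\tfrac14$, hence, pulling out the $\mathcal{F}_n$-measurable factor $m_n$,
\begin{equation*}
\mathbb{E}(\omega\chi_{A^{\comp}}|\mathcal{F}_n)\geq m_n\,\mathbb{E}(\chi_{A^{\comp}\cap\{\omega\geq m_n\}}|\mathcal{F}_n)\geq\tfrac14 m_n\geq\tfrac{1}{4C}\omega_n,
\end{equation*}
and therefore $\mathbb{E}_\omega(\chi_A|\mathcal{F}_n)=1-\mathbb{E}(\omega\chi_{A^{\comp}}|\mathcal{F}_n)/\omega_n\leq\beta$, as required.

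I expect the main obstacle to be \eqref{thm:imp2_Log}$\Rightarrow$\eqref{thm:imp2_Mid}. The temptation is to split $\omega$ at the level $m_n$, but then the tail $\mathbb{E}(\omega\chi_{\{\omega>m_n\}}|\mathcal{F}_n)$ appears, and conditional Markov controls it only by something linear in $\omega_n$, which is worthless for bounding $\omega_n$ by $m_n$. The way around this is to keep the exponential in \eqref{A_exp_infty} untouched: the layer-cake estimate then produces a $\tfrac12\log$ of the unknown ratio $x=\omega_n/m_n$, so $x$ enters the final inequality only as $\sqrt{x}$, and the resulting sublinear bound $x\lesssim\sqrt{x}$ closes the argument. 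Points to be careful about: that $m_n>0$ a.s.\ (so $\log(\omega/m_n)$ makes sense); the justification of the conditional Fubini step in the layer-cake formula (legitimate since the integrand is nonnegative); and that the object produced by Definition~\ref{media_f} really does satisfy both one-sided median estimates used above.
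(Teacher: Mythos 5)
Your proposal is correct, and for the two nontrivial implications it takes a genuinely different route from the paper. For \eqref{thm:imp2_Ap}$\Rightarrow$\eqref{thm:imp2_Log} the paper does not reprove anything: it invokes the implication \ref{Thm:equa_Ap}$\Rightarrow$\ref{Thm:equa_A_exp_infty} of Theorem \ref{Thm:equa}, which rests on Lemma \ref{key_lemma_sta} and the monotone limit $\mathbb{E}(\omega^{-\frac{1}{q-1}}|\mathcal{F}_n)^{q-1}\downarrow\exp\mathbb{E}(\log\frac{1}{\omega}|\mathcal{F}_n)$ as $q\uparrow\infty$; your single application of conditional Jensen at the given exponent $p$ reaches the same inequality without the limiting argument, which is a legitimate shortcut. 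The real divergence is in \eqref{thm:imp2_Log}$\Rightarrow$\eqref{thm:imp2_Mid}: the paper passes through Theorem \ref{thm:exp-s} (the equivalence $A^{exp}_\infty=A^{SW}_\infty$, again via Lemma \ref{key_lemma_sta}) and then runs a H\"older argument on $\mathbb{E}(\omega^s\chi_{\{\omega>m(\omega,n)\}}|\mathcal{F}_n)$, choosing $s$ so small that $2^{s-1}C^s<\frac34$, to conclude that the sub-median part carries a quarter of the mass of $\mathbb{E}(\omega^s|\mathcal{F}_n)$ and hence $\omega_n\leq 4^{1/s}Cm(\omega,n)$. You instead keep the exponential form, split off $\log m(\omega,n)$, and control $\mathbb{E}(\log^+(\omega/m(\omega,n))|\mathcal{F}_n)$ by a conditional layer-cake formula in which the integrand is bounded by $\min\{\frac12, xe^{-u}\}$ using the median property and conditional Markov; this yields the self-improving inequality $\omega_n\leq C\sqrt{2e}\,\sqrt{m(\omega,n)\,\omega_n}$, which closes after squaring. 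Your version is self-contained (it does not need Theorem \ref{thm:exp-s} or Lemma \ref{key_lemma_sta}) and gives the explicit constant $2eC^2$, at the cost of a somewhat longer computation and the bookkeeping points you already flag ($0<m(\omega,n)<\infty$ a.s., localization to the $\mathcal{F}_n$-set $\{\omega_n>2m(\omega,n)\}$, conditional Tonelli); the paper's version is shorter given its earlier machinery. Your proof of \eqref{thm:imp2_Mid}$\Rightarrow$\eqref{thm:imp2_Dou} is essentially identical to the paper's.
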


Now we give Theorem \ref{thm:Wi} which is related to $A^*_{\infty}$ in \cite{MR481968} and \cite{MR883661}.
The main ingredient of Theorem \ref{thm:Wi} is the conditional expectation of tailed maximal operators (see Definition \ref{tailed_o}). It is worth observing that \eqref{tailed_mo} equals $\mathbb{E}(M^*_n(\omega/\omega_n)|\mathcal {F}_n)\leq C.$
The tailed maximal operators first appeared in \cite{MR1301765} and were used to proved two-weight inequalities for martingales under some additional assumption.
In view of Theorem \ref{thm:Wi}, we have $A^{exp}_{\infty}\subseteq A^*_{\infty}.$

\begin{theorem}\label{thm:Wi} Given the following statements.
\begin{enumerate}
\item \label{thm:Wi_Ap}$\omega\in \bigcup\limits_{p>1}A_p.$

\item \label{thm:Wi_rev}
$\omega\in\bigcup\limits_{q>1}RH_q.$

\item \label{thm:Wi_log}$\omega\in A^{log}_{\infty}.$

\item \label{Thm:Wi_exp}$\omega\in A^{exp}_{\infty}.$
\end{enumerate}
Then each of these statements implies $\omega\in A^*_{\infty},$ i.e., for all $n\in\mathbb{N},$
\begin{equation}\label{tailed_mo}
\mathbb{E}(M^*_n(\omega)|\mathcal {F}_n)\leq C\omega_n.
\end{equation}
\end{theorem}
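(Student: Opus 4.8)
The plan is to split the four implications into two families: $(1)$ and $(4)$ will be routed through $A^{exp}_{\infty}$, while $(2)$ and $(3)$ will be routed through $A^{log}_{\infty}$, the common engine being a conditional $L\log L$ bound for the tailed maximal operator. Throughout write $M^{*}_{n}\omega=\sup_{m\ge n}\mathbb{E}(\omega|\mathcal{F}_{m})$, so that $M^{*}_{n}\omega\ge\omega_{n}$ pointwise, and recall that $\mathbb{E}_{\omega}(g|\mathcal{F}_{n})=\mathbb{E}(g\omega|\mathcal{F}_{n})/\omega_{n}$. The first step is the master estimate
\[
\mathbb{E}(M^{*}_{n}\omega\mid\mathcal{F}_{n})\le\frac{e}{e-1}\,\omega_{n}\Bigl(1+\mathbb{E}_{\omega}\bigl(\log^{+}\tfrac{\omega}{\omega_{n}}\,\big|\,\mathcal{F}_{n}\bigr)\Bigr),
\]
valid for every weight. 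This is the conditional form of Doob's $L\log L$ inequality: for $\lambda\ge\omega_{n}$ the stopping time $\tau=\inf\{m\ge n:\mathbb{E}(\omega|\mathcal{F}_{m})>\lambda\}$ satisfies, by optional stopping together with $\mathbb{E}(\omega|\mathcal{F}_{\tau})>\lambda$ on $\{\tau<\infty\}$, the conditional weak-type bound $\mathbb{P}(M^{*}_{n}\omega>\lambda\mid\mathcal{F}_{n})\le\lambda^{-1}\mathbb{E}(\omega\,\chi_{\{M^{*}_{n}\omega>\lambda\}}\mid\mathcal{F}_{n})$; integrating in $\lambda$ via the conditional layer-cake formula (splitting the integral at $\omega_{n}$) gives $\mathbb{E}(M^{*}_{n}\omega\mid\mathcal{F}_{n})\le\omega_{n}+\mathbb{E}(\omega\log(M^{*}_{n}\omega/\omega_{n})\mid\mathcal{F}_{n})$, and Young's inequality $u\log v\le u\log^{+}u+v/e$ followed by absorbing $\tfrac{1}{e}\mathbb{E}(M^{*}_{n}\omega\mid\mathcal{F}_{n})$ into the left-hand side finishes it. The absorption is made rigorous by first running the computation with $\sup_{n\le m\le N}$ in place of $\sup_{m\ge n}$ and then letting $N\to\infty$ by monotone convergence. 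In particular this already yields $(3)$, since $A^{log}_{\infty}$ is exactly the boundedness of the bracket on the right.

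For $(2)\Rightarrow(3)$ I would use the elementary inequality $t\log^{+}t\le t^{q}/(e(q-1))$ for $t\ge 0$, $q>1$; with $t=\omega/\omega_{n}$ it gives $\tfrac{\omega}{\omega_{n}}\log^{+}\tfrac{\omega}{\omega_{n}}\le\tfrac{1}{e(q-1)}\tfrac{\omega^{q}}{\omega_{n}^{q}}$, so taking $\mathbb{E}(\cdot\mid\mathcal{F}_{n})$ and invoking $\omega\in\bigcup_{q>1}RH_{q}$ bounds $\mathbb{E}_{\omega}(\log^{+}(\omega/\omega_{n})\mid\mathcal{F}_{n})$ by $C/(e(q-1))$. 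Combined with the master estimate this gives $(2)\Rightarrow A^{*}_{\infty}$.

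For $(4)$ (hence $(1)$) the master estimate does not suffice, since $A^{exp}_{\infty}$ does not control $\mathbb{E}_{\omega}(\log^{+}(\omega/\omega_{n})\mid\mathcal{F}_{n})$; instead I would invoke Theorem~\ref{thm:exp-s}, $A^{exp}_{\infty}=A^{SW}_{\infty}$, and fix $s=1/2$ (any $s\in(0,1)$ works). Applying $A^{SW}_{\infty}$ at every level $m\ge n$ and taking the supremum yields the pointwise bound $M^{*}_{n}\omega\le C\,(M^{*}_{n}(\omega^{s}))^{1/s}$; the conditional Doob maximal inequality in $L^{1/s}$ applied to the martingale $(\mathbb{E}(\omega^{s}|\mathcal{F}_{m}))_{m\ge n}$ (legitimate since $1/s>1$) gives $\mathbb{E}((M^{*}_{n}(\omega^{s}))^{1/s}\mid\mathcal{F}_{n})\le(1-s)^{-1/s}\,\mathbb{E}(\omega\mid\mathcal{F}_{n})$, whence $\mathbb{E}(M^{*}_{n}\omega\mid\mathcal{F}_{n})\le C(1-s)^{-1/s}\omega_{n}$. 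This is $(4)\Rightarrow A^{*}_{\infty}$, and $(1)\Rightarrow(4)$ is precisely the implication $\eqref{thm:imp2_Ap}\Rightarrow\eqref{thm:imp2_Log}$ of Theorem~\ref{thm:imp2}.

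The main obstacle is the master estimate in the first step: it must be established with no regularity hypothesis on the filtration, so one cannot bound $\mathbb{E}(\omega|\mathcal{F}_{\tau})$ from above at the stopping time — only the trivial lower bound $\mathbb{E}(\omega|\mathcal{F}_{\tau})>\lambda$ on $\{\tau<\infty\}$ is available, which is exactly what the weak-type argument consumes — and the subtle points are the conditional layer-cake bookkeeping and the truncation in $N$ justifying the absorption. Once this is in place, the rest is routine given the two elementary pointwise inequalities and Theorems~\ref{thm:exp-s} and~\ref{thm:imp2}.
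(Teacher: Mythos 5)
Your proposal is correct, but it reaches the conclusion by a genuinely different route for implications \eqref{thm:Wi_Ap}--\eqref{thm:Wi_log}; only your treatment of \eqref{Thm:Wi_exp} coincides with the paper's (same pointwise bound $M^*_n(\omega)\leq C\,M^*_n(\omega^s)^{1/s}$ via Theorem \ref{thm:exp-s}, same use of Doob in $L^{1/s}$). The paper proves each case separately by testing $\int_B M^*_n(\omega)\,d\mu$ against $B\in\mathcal{F}_n$: for \eqref{thm:Wi_log} it uses the localized weak $(1,1)$ bound of Lemma \ref{doob_imp}, whose restriction to $\{|f|>\lambda/2\}$ makes the layer-cake computation close without any absorption; for \eqref{thm:Wi_rev} it applies the conditional Doob inequality (Lemma \ref{doob_con}) directly to $\mathbb{E}(M^*_n(\omega)^q|\mathcal{F}_n)$ and then $RH_q$; and for \eqref{thm:Wi_Ap} it invokes the boundedness of $M^*_n$ on $L^{p'}(\omega^{1-p'})$ for $\omega^{1-p'}\in A_{p'}$, a weighted maximal inequality imported from Long's book. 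You instead build a single master conditional $L\log L$ estimate from an unlocalized optional-stopping weak-type bound plus the Young inequality $u\log v\leq u\log^+u+v/e$ and an absorption (justified by truncating at level $N$), which handles \eqref{thm:Wi_log} outright and \eqref{thm:Wi_rev} after the elementary bound $t\log^+t\leq t^q/(e(q-1))$; and you dispose of \eqref{thm:Wi_Ap} by the non-circular reduction $\eqref{thm:imp2_Ap}\Rightarrow\eqref{thm:imp2_Log}$ of Theorem \ref{thm:imp2}. The trade-off: your argument is more unified and avoids the weighted Doob inequality entirely, making explicit that an $L\log L$ control of $\omega/\omega_n$ is the engine behind $A^*_\infty$; the paper's argument avoids the conditional layer-cake bookkeeping with the random splitting point $\omega_n$ and the absorption/truncation step by always integrating over test sets in $\mathcal{F}_n$, at the cost of a heavier external input in case \eqref{thm:Wi_Ap}. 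Both are sound.
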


The paper is organized as follows. Some preliminaries are contained in Sect. \ref{preli}.
In Sect. \ref{regular} we prove Theorem \ref{Thm:equa} for regular weights. Sect. \ref{without} is devoted to
theorems without additional assumptions.

\section{Preliminaries}\label{preli}

Let $(\Omega,\mathcal {F},\mu)$ be a complete probability space and
$(\mathcal {F}_n)_{n\geq0}$ an increasing sequence of
sub$\hbox{-}\sigma\hbox{-}$fields of $\mathcal{F}$ with
$\mathcal{F}=\bigvee_{n\geq0}\mathcal{F}_n.$
The conditional expectation with respect
 to $(\Omega,\mathcal{F},\mu,\mathcal{F}_n)$ is denoted by $\mathbb{E}(\cdot|\mathcal{F}_n).$
In this paper, for $p\geq1,$ a martingale
$f=(f_n)_{n\geq0}\in L^p(\omega)$ is meant as $f_n=E(f|\mathcal
{F}_n),~f\in L^p(\omega).$ A weight
$\omega$ is a random variable with $\omega>0$ and $
E(\omega)<\infty.$ Without loss of generality, we may assume
$E(\omega)=1$ since otherwise we can replace $\omega$ by $\omega/E(\omega).$

\begin{definition}\label{tailed_o}The Doob maximal operator $M$ and the tailed maximal
operator $M_n^*$ for martingale $f=(f_n)$ are defined by
\begin{equation*}Mf=\sup\limits_{n\geq 0}|f_n|\hbox{ and }M_n^*f=\sup\limits_{m\geq n}|f_n|,
\end{equation*}
respectively.
\end{definition}

\begin{definition}\label{regular_p}The weight $\omega$ is said to satisfy the regularity condition $S$, if there exists $C>1$
such that for all $n\in \mathbb{N}$ we have
\begin{equation}\label{regular_con}\frac{1}{C}\omega_{n-1}\leq\omega_n\leq C\omega_{n-1},\end{equation}
which is denoted by $\omega\in S.$
\end{definition}

\begin{remark} The right hand side of $S$
trivially holds if $(\mathcal {F}_n)_{n\geq0}$ is regular. The regularity of $(\mathcal {F}_n)_{n\geq0}$ is used several times
in the literature and is a very important definition (see \cite{MR1224450} and \cite{MR1320508}). More exactly, both sides of the condition $S$ follow from the
regularity of $(\mathcal {F}_n)_{n\geq0}$ (see \cite{MR3895312}).
\end{remark}

Let $\omega$ be a weight. We denote the conditional expectation
with respect
to $(\Omega,\mathcal{F},\omega d\mu,\mathcal{F}_n)$ by $\mathbb{E}_{\omega}(\cdot|\mathcal{F}_n).$ It follows that
$$\mathbb{E}_{\omega}(\cdot|\mathcal{F}_n)=\mathbb{E}(\cdot\omega|\mathcal{F}_n)/\omega_n=\mathbb{E}(\cdot\frac{\omega}{\omega_n}|\mathcal{F}_n).$$ If $A\in \mathcal{F}$,
 we denote  $\int_A\omega d\mu $ by $|A|_\omega$ and $\int_Ad\mu$ by
$|A|$, respectively.
For
$(\Omega,\mathcal {F},\mu)$ and $(\mathcal {F}_n)_{n\geq0},$ the
family of all stopping times is denoted by $\mathcal {T}.$

\begin{definition}\label{media_f}The median function of $\omega$ relative to $\mathcal {F}_n$ is defined as a $\mathcal {F}_n$ measurable function $m(\omega,n)$ such that $\mathbb{E}(\chi_{\{\omega> m(\omega,n)\}}|\mathcal {F}_n)\leq 1/ 2$
and $\mathbb{E}(\chi_{\{\omega< m(\omega,n)\}}|\mathcal {F}_n)\leq  1/ 2.$

\end{definition}

We denote the set of non-negative integers by $\mathbb{N}$ and all integers
by $\mathbb{Z},$ respectively. Throughout the paper letter $C$ always denotes a positive constant which may be different in each occurrence.

\section{Equivalent Characterizations of Regular $A_{\infty}$ Weights}\label{regular}
The following Lemma \ref{key_lemma_sta} will be used in the proof of Theorem \ref{Thm:equa}.

\begin{lemma}\label{key_lemma_sta}Let $v$ be a positive measurable function
and let $0<s_0<+\infty$. If $v\in L^{s_0},$ then
\begin{equation}\label{key_lemma}
\mathbb{E}(v^s|\mathcal {F}_n)^{\frac{1}{s}}\downarrow\exp \mathbb{E}(\log v|\mathcal {F}_n),~\text{~as~}s\downarrow0^+,
\end{equation}\end{lemma}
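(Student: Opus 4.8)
The plan is to prove the two assertions of the lemma in order: first the pointwise (a.e.) limit
$\mathbb{E}(v^s|\mathcal{F}_n)^{1/s}\to\exp\mathbb{E}(\log v|\mathcal{F}_n)$ as $s\downarrow0^+$, and then the monotonicity, i.e. that $s\mapsto\mathbb{E}(v^s|\mathcal{F}_n)^{1/s}$ is non-decreasing in $s$ on $(0,s_0]$, so that the convergence is a genuine decreasing limit. Throughout I work with a fixed $n$ and with everything understood modulo $\mu$-null sets; since $v\in L^{s_0}$ with $s_0>0$, the conditional expectation $\mathbb{E}(v^s|\mathcal{F}_n)$ is finite a.e.\ for every $s\in(0,s_0]$, so the expressions make sense.

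For the limit, the key step is the elementary pointwise identity
\begin{equation*}
\frac{v^s-1}{s}\;\longrightarrow\;\log v\qquad\text{as }s\downarrow 0^+,
\end{equation*}
valid for each $\omega\in\Omega$ with $v(\omega)>0$, together with control of the difference quotient so that I may pass the limit inside $\mathbb{E}(\cdot|\mathcal{F}_n)$. Writing $\mathbb{E}(v^s|\mathcal{F}_n)^{1/s}=\exp\!\big(\tfrac1s\log\mathbb{E}(v^s|\mathcal{F}_n)\big)=\exp\!\big(\tfrac1s\log(1+\mathbb{E}(v^s-1|\mathcal{F}_n))\big)$ and using $\log(1+t)\sim t$ as $t\to0$, it suffices to show $\tfrac1s\mathbb{E}(v^s-1|\mathcal{F}_n)\to\mathbb{E}(\log v|\mathcal{F}_n)$ a.e. I will split $v=v\chi_{\{v\le1\}}+v\chi_{\{v>1\}}$. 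On $\{v\le1\}$ the quotient $(v^s-1)/s$ is non-positive and monotone in $s$ (it decreases to $\log v$ as $s\downarrow0$, since $(1-v^s)/s$ increases), so conditional monotone convergence applies; on $\{v>1\}$ one has $0\le (v^s-1)/s\le (v^{s_0}-1)/s_0\le v^{s_0}/s_0$ for $s\le s_0$, an integrable dominating function, so conditional dominated convergence applies. Combining the two pieces gives the a.e.\ convergence of $\tfrac1s\mathbb{E}(v^s-1|\mathcal{F}_n)$ to $\mathbb{E}(\log v|\mathcal{F}_n)$, whence the exponentials converge; one must also note $\mathbb{E}(\log v|\mathcal{F}_n)<\infty$ a.e.\ (from the $\{v>1\}$ bound) and that $\mathbb{E}(v^s-1|\mathcal{F}_n)\to0$ a.e.\ so the $\log(1+\cdot)$ replacement is legitimate (if $\mathbb{E}((\log v)^-|\mathcal{F}_n)=\infty$ on a set, both sides are $0$/$-\infty$ consistently, which I will remark on).

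For the monotonicity, the key step is conditional Jensen's inequality: for $0<s<t\le s_0$ the map $x\mapsto x^{t/s}$ is convex on $[0,\infty)$, so $\mathbb{E}(v^s|\mathcal{F}_n)^{t/s}\le\mathbb{E}\big((v^s)^{t/s}|\mathcal{F}_n\big)=\mathbb{E}(v^t|\mathcal{F}_n)$, and raising both sides to the power $1/t$ gives $\mathbb{E}(v^s|\mathcal{F}_n)^{1/s}\le\mathbb{E}(v^t|\mathcal{F}_n)^{1/t}$. Hence the family is non-decreasing in $s$, so as $s\downarrow0^+$ it decreases, and by the first part its limit is $\exp\mathbb{E}(\log v|\mathcal{F}_n)$; the $\downarrow$ in \eqref{key_lemma} is thereby justified. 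The main obstacle is the interchange of limit and conditional expectation on the piece $\{v\le1\}$ when $\log v$ may fail to be conditionally integrable: I will handle this by the conditional monotone convergence theorem for the non-positive monotone sequence $(v^{s}-1)/s\downarrow\log v$, which is valid without integrability and yields the limit in $[-\infty,0]$, so that the identity \eqref{key_lemma} holds in $[0,\infty]$ in all cases and is a finite positive limit precisely when $\log v$ is conditionally integrable.
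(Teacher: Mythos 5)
Your proposal is correct and follows essentially the same route as the paper: monotonicity in $s$ via conditional Jensen/H\"older, the pointwise fact $\frac{v^s-1}{s}\downarrow\log v$, and a conditional convergence theorem to pass the limit through $\mathbb{E}(\cdot|\mathcal{F}_n)$, using $v\in L^{s_0}$ for integrable domination. The only (cosmetic) differences are that the paper obtains the limit by squeezing between the Jensen lower bound and the upper bound $\mathbb{E}(v^s|\mathcal{F}_n)^{1/s}\leq\exp\mathbb{E}\bigl(\frac{v^s-1}{s}|\mathcal{F}_n\bigr)$ coming from $x\leq e^{x-1}$ (your $\log(1+t)\leq t$ in disguise), and applies a single monotone convergence to $\frac{v^{s_0}-1}{s_0}-\frac{v^s-1}{s}$ rather than splitting over $\{v\leq1\}$ and $\{v>1\}$.
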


\begin{proof}[Proof of Lemma \ref{key_lemma_sta}] Because of $v\in L^{s_0},$ we have $v\in L^{s}$ with $0<s<s_0.$
H\"{o}lder's inequality for the conditional expectation (\cite[p.3]{MR1224450}) gives
$\mathbb{E}(v^s|\mathcal {F}_n)^{\frac{1}{s}}\leq\mathbb{E}(v^t|\mathcal {F}_n)^{\frac{1}{t}}$ with $0<s<t<s_0.$

Following Jensen's inequality for the conditional expectation (\cite[p.5]{MR1224450}), we have
$$
\exp \mathbb{E}(\log v^s|\mathcal {F}_n)\leq\mathbb{E}(v^s|\mathcal {F}_n),
$$
which implies \begin{equation}\label{eq_left}\exp \mathbb{E}(\log v|\mathcal {F}_n)\leq\mathbb{E}(v^s|\mathcal {F}_n)^{\frac{1}{s}}.
\end{equation}
Because of $x\leq \exp(x-1)$ for $x>0,$ then
$$\mathbb{E}(v^s|\mathcal {F}_n)\leq\exp \big(\mathbb{E}(v^s|\mathcal {F}_n)-1\big).$$
It follows that
\begin{equation}\label{eq_right}\mathbb{E}(v^s|\mathcal {F}_n)^{\frac{1}{s}}\leq\exp (\frac{\mathbb{E}(v^s|\mathcal {F}_n)-1}{s})=\exp \mathbb{E}(\frac{v^s-1}{s}|\mathcal {F}_n).
\end{equation}
Let $f(x)=\frac{x^s-1}{s}-\frac{x^t-1}{t}$ with $s>t>0$ and $x>0.$ Then $f(1)=0$ is the minimum value of $f$ on $(0,+\infty).$
It follows that for all $x>0$ we have $\frac{x^s-1}{s}\downarrow\log x,$ as $s\downarrow0^+.$ Using Monotone Convergence Theorem for the conditional expectation (\cite[p.5]{MR1224450}), we obtain that
\begin{eqnarray*}
\mathbb{E}\big(\frac{v^{s_0}-1}{s_0}-\frac{v^s-1}{s}|\mathcal {F}_n\big)&\uparrow&\mathbb{E}\big(\frac{v^{s_0}-1}{s_0}-\log v|\mathcal {F}_n\big), ~\text{~as~}s\downarrow0^+.
\end{eqnarray*}
Thus \begin{equation}\label{eq_further}
\lim\limits_{s\rightarrow0^+}\mathbb{E}(\frac{\omega^s-1}{s}|\mathcal {F}_n)=\mathbb{E}(\log\omega|\mathcal {F}_n).
\end{equation}
Combining \eqref{eq_left}, \eqref{eq_right} and\eqref{eq_further}, we deduce
$$\lim\limits_{s\rightarrow0^+}\mathbb{E}(\omega^s|\mathcal {F}_n)^{\frac{1}{s}}=\exp \mathbb{E}(\log\omega|\mathcal {F}_n).$$
This completes the proof of \eqref{key_lemma}
\end{proof}

To prove Theorem \ref{Thm:equa}, we use $\omega\in S$ only in $\ref{Thm:equa_R}\xRightarrow{S}\ref{Thm:equa_RH}$ and
$\ref{Thm:equa_rR}\xRightarrow{S}\ref{Thm:equa_Ap}.$

\begin{proof}[Proof of Theorem \ref{Thm:equa}] We shall follow the scheme:

\begin{center}
\begin{figure}[H]
	\begin{tikzpicture}[scale=0.75]
	\draw (1,3) node {\ref{Thm:equa_Ap}} -- (1,3);
	\draw (4,3) node {\ref{Thm:equa_A_exp_infty}} -- (4,3);
	\draw (7,3) node {\ref{Thm:equa_wAinfty}} -- (7,3);
    \draw (10,3) node {\ref{Thm:equa_R}} -- (10,3);
	\draw (2.5,1) node {\ref{Thm:equa_rR}} -- (2.5,1);
    \draw (5.5,1) node {\ref{Thm:equa_RR}} -- (5.5,1);
    \draw (8.5,1) node {\ref{Thm:equa_RH}} -- (8.5,1);
	\draw (1.4,1.8) node {S} -- (1.4,1.8);
    \draw (9.6,1.8) node {S} -- (9.6,1.8);
    \draw[-implies,double equal sign distance, line width=0.25mm] (1.5,3) -- (3.5,3);
    \draw[-implies,double equal sign distance, line width=0.25mm] (4.5,3) -- (6.5,3);
    \draw[-implies,double equal sign distance, line width=0.25mm] (7.5,3) -- (9.5,3);
    \draw[-implies,double equal sign distance, line width=0.25mm] (9.8,2.6) -- (8.7,1.4);
    \draw[-implies,double equal sign distance, line width=0.25mm] (8,1) -- (6,1);
    \draw[-implies,double equal sign distance, line width=0.25mm] (5,1) -- (3,1);
    \draw[-implies,double equal sign distance, line width=0.25mm] (2.3,1.4) -- (1.2,2.6);
	\draw (0,0) node {} -- (0,0);
	\draw (0,3) node {} -- (0,3);
	\end{tikzpicture}
	\end{figure}
\end{center}

$\ref{Thm:equa_Ap}\Rightarrow\ref{Thm:equa_A_exp_infty}.$ Let $\omega\in A_p.$ Then for all $q\geq p,$ we have $\omega\in A_q.$ In view of Lemma \ref{key_lemma_sta} with $s=\frac{1}{q-1}$ and $v=\frac{1}{\omega},$ we obtain that
$$
\mathbb{E}(\omega^{-\frac{1}{q-1}}|\mathcal {F}_n)^{{q-1}}\downarrow\exp \mathbb{E}(\log \frac{1}{\omega}|\mathcal {F}_n),~\text{~as~}q\uparrow+\infty.
$$
Thus
$$\mathbb{E}(\omega|\mathcal {F}_n)\exp \mathbb{E}(\log\frac{1}{\omega}|\mathcal
{F}_n)\leq C,$$ which implies  $\mathbb{E}(\omega|\mathcal {F}_n)\leq C\exp \mathbb{E}(\log\omega|\mathcal
{F}_n).$

$\ref{Thm:equa_A_exp_infty}\Rightarrow\ref{Thm:equa_wAinfty}.$
Fix $n\in \mathbb{N}.$
Letting $v_n=\exp
\mathbb{E}(\log\omega|\mathcal {F}_n),$ we have that
\begin{eqnarray*}
1=\frac{1}{v_n}\exp\mathbb{E}(\log\omega|\mathcal {F}_n)
=\exp\mathbb{E}(\log\frac{\omega}{v_n}|\mathcal {F}_n).
\end{eqnarray*}
It follows that
\begin{equation}\label{equa0}\mathbb{E}(\log\frac{\omega}{v_n}|\mathcal {F}_n)=0.
\end{equation}
Using \eqref{A_exp_infty}, we obtain that
\begin{equation}\label{eqbound}\frac{1}{v_n}\mathbb{E}(\omega|\mathcal {F}_n)
\leq\frac{C}{v_n}\exp
\mathbb{E}(\log\omega|\mathcal {F}_n)
=C.\end{equation}
For some $\gamma>0$ to
be chosen later, we observe that
\begin{eqnarray*}
\{\omega\leq\gamma\omega_n\}
&=&\{\frac{\omega}{v_n}\leq
\frac{\gamma}{v_n}\mathbb{E}(\omega|\mathcal {F}_n)\}\\
&\subseteq&\{\frac{\omega}{v_n}\leq\gamma C\}\\
&\subseteq&\{\log(1+\frac{v_n}{\omega})\geq\log(1+\frac{1}{\gamma
C})\}.\end{eqnarray*}
Thus
\begin{eqnarray*}
\mathbb{E}(\chi_{\{\omega\leq\gamma\omega_n\}}|\mathcal{F}_n)
&\leq&\mathbb{E}(\chi_{\{\log(1+\frac{v_n}{\omega})\geq\log(1+\frac{1}{\gamma C})\}}|\mathcal{F}_n)\\
&\leq&\frac{1}{\log(1+\frac{1}{\gamma
                   C})}\mathbb{E}\big(\log(1+\frac{v_n}{\omega})|\mathcal
                   {F}_n\big)\\
&=&\frac{1}{\log(1+\frac{1}{\gamma
                   C})}\Big(\mathbb{E}\big(\log(1+\frac{\omega}{v_n})|\mathcal
                   {F}_n\big)-\mathbb{E}\big(\log\frac{\omega}{v_n}|\mathcal
                   {F}_n\big)\Big)\\
&=&\frac{1}{\log(1+\frac{1}{\gamma
                   C})}\mathbb{E}\big(\log(1+\frac{\omega}{v_n})|\mathcal
                   {F}_n\big),\end{eqnarray*}
where we have used \eqref{equa0}.
It follows from \eqref{eqbound} that
\begin{eqnarray*}
\frac{1}{\log(1+\frac{1}{\gamma
                   C})}\mathbb{E}\big(\log(1+\frac{\omega}{v_n})|\mathcal
                   {F}_n\big)
&\leq&\frac{1}{\log(1+\frac{1}{\gamma
                   C})}\mathbb{E}(\frac{\omega}{v_n}|\mathcal{F}_n)\\
&\leq&\frac{C}{\log(1+\frac{1}{\gamma C})}.
\end{eqnarray*}
Since
$\lim\limits_{\gamma\rightarrow0}\frac{C}{\log(1+\frac{1}{\gamma
                   C})}=0,$ we deduce that \eqref{wAinfty} holds.

$\ref{Thm:equa_wAinfty}\Rightarrow\ref{Thm:equa_R}.$ Fix $n\in \mathbb{N}.$ Suppose that $A\in \mathcal {F}$
with $\mathbb{E}_\omega(\chi_A|\mathcal {F}_n)>\beta$ for some $\beta$ to be
chosen later. Denote $A_1^c=A^c\cap\{\omega>\gamma\omega_n\}$ and
$A_2^c=A^c\cap\{\omega\leq\gamma\omega_n\}.$ Then we have
$$\mathbb{E}(\chi_{A^c}|\mathcal {F}_n)= \mathbb{E}(\chi_{A_1^c}|\mathcal {F}_n)+\mathbb{E}(\chi_{A_2^c}|\mathcal {F}_n)
\leq \frac{1}{\gamma\omega_n}\mathbb{E}(\omega\chi_{A_1^c}|\mathcal
{F}_n)+\delta.$$ Since $\mathbb{E}(\omega\chi_{A^c}|\mathcal
{F}_n)=\mathbb{E}_\omega(\chi_{A^c}|\mathcal {F}_n)\omega_n,$ it follows that
$$\mathbb{E}(\chi_{A^c}|\mathcal
{F}_n)\leq\frac{1}{\gamma}\mathbb{E}_\omega(\chi_{A^c}|\mathcal
{F}_n)+\delta<\frac{1-\beta}{\gamma}+\delta.$$ Because of
$\lim\limits_{\beta\rightarrow1}(\frac{1-\beta}{\gamma}+\delta)=\delta<1,$ it is possible to choose $\beta\in(0,1)$ such that
$\alpha=1-\frac{1-\beta}{\gamma}-\delta\in(0,1).$ Therefore, we
obtain that $$\mathbb{E}(\chi_{A^c}|\mathcal {F}_n)<1-\alpha,$$ which implies $\mathbb{E}(\chi_A|\mathcal {F}_n)>\alpha.$
Thus \ref{Thm:equa_R} is valid.

$\ref{Thm:equa_R}\xRightarrow{S}\ref{Thm:equa_RH}.$ Fix $n\in \mathbb{N}.$ Let
$\tilde{\omega}=:\frac{\omega}{\omega_n}.$ For $m\in \mathbb{N},$ denote
$\tilde{\mathcal {F}}_m=:\mathcal {F}_{n+m}.$ Since $\omega\in
S,$ we obtain a constant $C$ such that
$$\frac{1}{C}\mathbb{E}(\tilde{\omega}|\tilde{\mathcal {F}}_m)\leq
\mathbb{E}(\tilde{\omega}|\tilde{\mathcal {F}}_{m+1})\leq C
\mathbb{E}(\tilde{\omega}|\tilde{\mathcal {F}}_m),~\forall~m\in \mathbb{N}.$$ For
$k\in \mathbb{N}$, define
$\tilde{\tau}_k:=\inf\{m:\mathbb{E}(\tilde{\omega}|\tilde{\mathcal
{F}}_m)>\frac{1}{2}2^{kL}\},$ where $L\geq1$ is a large integer to
be chosen momentarily. Trivially, $\tilde{\tau}_0\equiv0$ and
$\tilde{\tau}_k\geq1,~\forall~k\geq1.$ For $k,~m\geq1,$ we have

\begin{eqnarray*}
\mathbb{E}(\chi_{\{\tilde{\tau}_{k+1}<\infty\}}\chi_{\{\tilde{\tau}_k=m\}}|\tilde{\mathcal
             {F}}_{\tilde{\tau}_k})
&\leq&\mathbb{E}\big(\frac{E(\tilde{\omega}|\mathcal
             {F}_{\tilde{\tau}_{k+1}})}{\frac{1}{2}2^{(k+1)L}}\chi_{\{\tilde{\tau}_k=m\}}|\tilde{\mathcal{F}}_m\big)\\
&=&2\mathbb{E}\big(\frac{E(\tilde{\omega}|\mathcal
             {F}_{\tilde{\tau}_{k+1}})}{2^{(k+1)L}}|\tilde{\mathcal{F}}_{\tilde{\tau}_k}\big)\chi_{\{\tilde{\tau}_k=m\}}\\
&=&\frac{2}{2^{(k+1)L}}\mathbb{E}\big(\mathbb{E}(\tilde{\omega}|\tilde{\mathcal
             {F}}_{\tilde{\tau}_{k+1}})|\tilde{\mathcal{F}}_{\tilde{\tau}_k}\big)\chi_{\{\tilde{\tau}_k=m\}}.
\end{eqnarray*} It is clear that $\tilde{\tau}_k\leq\tilde{\tau}_{k+1},$ then
\begin{eqnarray*}
\mathbb{E}(\chi_{\{\tilde{\tau}_{k+1}<\infty\}}\chi_{\{\tilde{\tau}_k=m\}}|\tilde{\mathcal
             {F}}_{\tilde{\tau}_k})
&\leq&\frac{2}{2^{(k+1)L}}\mathbb{E}(\tilde{\omega}|\tilde{\mathcal
             {F}}_{\tilde{\tau}_k})\chi_{\{\tilde{\tau}_k=m\}}\\
&=&\frac{2}{2^{(k+1)L}}\mathbb{E}(\tilde{\omega}|\tilde{\mathcal
             {F}}_m)\chi_{\{\tilde{\tau}_k=m\}}\\
&\leq&\frac{2C}{2^{(k+1)L}}\mathbb{E}(\tilde{\omega}|\tilde{\mathcal
             {F}}_{m-1})\chi_{\{\tilde{\tau}_k=m\}}\\
&\leq&\frac{2^{kL}C}{2^{(k+1)L}}\chi_{\{\tilde{\tau}_k=m\}}.\end{eqnarray*}

Here
we choose $L$ so large that $\frac{2^{kL}C}{2^{(k+1)L}}\leq \alpha,$
then$$
\mathbb{E}(\chi_{\{\tilde{\tau}_{k+1}<\infty\}}\chi_{\{\tilde{\tau}_k=m\}}|\tilde{\mathcal
             {F}}_{\tilde{\tau}_k})
\leq\alpha\chi_{\{\tilde{\tau}_k=m\}}.$$ Moreover,
\begin{eqnarray*}
\mathbb{E}(\chi_{\{\tilde{\tau}_{k+1}<\infty\}\cap{\{\tilde{\tau}_k=m\}}}|\mathcal
             {F}_{n+m})
&=&\mathbb{E}(\chi_{\{\tilde{\tau}_{k+1}<\infty\}}\chi_{\{\tilde{\tau}_k=m\}}|\mathcal
             {F}_{n+m})\\
&=&\mathbb{E}(\chi_{\{\tilde{\tau}_{k+1}<\infty\}}\chi_{\{\tilde{\tau}_k=m\}}|\tilde{\mathcal
             {F}}_m)\\
&=&\mathbb{E}(\chi_{\{\tilde{\tau}_{k+1}<\infty\}}\chi_{\{\tilde{\tau}_k=m\}}|\tilde{\mathcal
             {F}}_{\tilde{\tau}_k})\\
&\leq&\alpha\chi_{\{\tilde{\tau}_k=m\}}\leq\alpha.\end{eqnarray*}

 Combining with
$\mathbb{E}_{\tilde{\omega}}(\cdot|\mathcal
             {F}_{n+m})=\mathbb{E}_\omega(\cdot|\mathcal
             {F}_{n+m})$ and\eqref{R}, we have $$
\mathbb{E}_{\tilde{\omega}}(\chi_{\{\tilde{\tau}_{k+1}<\infty\}\cap{\{\tilde{\tau}_k=m\}}}|\mathcal
             {F}_{n+m})\leq\beta.$$
Thus
\begin{eqnarray*}
\mathbb{E}_{\tilde{\omega}}(\chi_{\{\tilde{\tau}_{k+1}<\infty\}}\chi_{{\{\tilde{\tau}_k=m\}}}|\tilde{\mathcal
             {F}}_{\tilde{\tau}_k})
&=&\mathbb{E}_{\tilde{\omega}}(\chi_{\{\tilde{\tau}_{k+1}<\infty\}\cap{\{\tilde{\tau}_k=m\}}}|\mathcal
             {F}_{n+m})\chi_{\{\tilde{\tau}_k=m\}}\\
&\leq&\beta\chi_{\{\tilde{\tau}_k=m\}}.\end{eqnarray*} Consequently,
\begin{eqnarray*}
\mathbb{E}_{\tilde{\omega}}(\chi_{\{\tilde{\tau}_{k+1}<\infty\}}|\tilde{\mathcal
             {F}}_{\tilde{\tau}_k})
&=&\mathbb{E}_{\tilde{\omega}}(\chi_{\{\tilde{\tau}_{k+1}<\infty\}}\chi_{\{\tilde{\tau}_k<\infty\}}|\tilde{\mathcal
             {F}}_{\tilde{\tau}_k})\\
&=&\sum\limits_{m=1}\limits^{\infty}\mathbb{E}_{\tilde{\omega}}
             (\chi_{\{\tilde{\tau}_{k+1}<\infty\}}\chi_{\{\tilde{\tau}_k=m\}}|\tilde{\mathcal
             {F}}_{\tilde{\tau}_k})\\
&=&\sum\limits_{m=1}\limits^{\infty}\beta\chi_{\{\tilde{\tau}_k=m\}}\leq\beta\chi_{\{\tilde{\tau}_k<\infty\}}.\end{eqnarray*}
It follows that \begin{eqnarray*}
\mathbb{E}_{\tilde{\omega}}(\chi_{\{\tilde{\tau}_{k+1}<\infty\}}|\mathcal
             {F}_n)
&=&\mathbb{E}_{\tilde{\omega}}(\mathbb{E}_{\tilde{\omega}}(\chi_{\{\tilde{\tau}_{k+1}<\infty\}}|\tilde{\mathcal
             {F}}_{\tilde{\tau}_k})|\tilde{\mathcal
             {F}}_{\tilde{\tau}_0})\\
&\leq&\beta
             \mathbb{E}_{\tilde{\omega}}(\chi_{\{\tilde{\tau}_k<\infty\}}|\tilde{\mathcal
             {F}}_{\tilde{\tau}_0})\\
&\leq&\beta^k
             \mathbb{E}_{\tilde{\omega}}(\chi_{\{\tilde{\tau}_1<\infty\}}|\tilde{\mathcal
             {F}}_{\tilde{\tau}_0})\\
&\leq&\beta^k,\end{eqnarray*} which is also valid for $k=0.$  For some positive
number $\varepsilon$ to be determined later, we have \begin{eqnarray*}
\mathbb{E}\big((\frac{\omega}{\omega_n})^{1+\varepsilon}|\mathcal
             {F}_n\big)
&=&\mathbb{E}(\tilde{\omega}^{1+\varepsilon}|\mathcal
             {F}_n)\\
&=&\mathbb{E}(\tilde{\omega}^\varepsilon\tilde{\omega}|\mathcal
             {F}_n)\\
&\leq&\mathbb{E}\big(M^*_n(\tilde{\omega})^\varepsilon\tilde{\omega}|\mathcal
             {F}_n\big),\end{eqnarray*} where $M^*_n(\cdot)=\sup\limits_{m\geq0}\mathbb{E}(\cdot|\mathcal
             {F}_{m+n})=\sup\limits_{m\geq0}\mathbb{E}(\cdot|\tilde{\mathcal
             {F}}_{m}).$
Because $\bigcap\limits_{k\in \mathbb{N}}\{\tilde{\tau}_k=\infty\}={\O},$
$\{\tilde{\tau}_0<\infty\}=\Omega$ and $\mathbb{E}_{\tilde{\omega}}\big(
             \chi_{\{\tilde{\tau}_0<\infty\}}|\mathcal
             {F}_n\big)=1,$
we obtain that \begin{eqnarray*}
\mathbb{E}\big((\frac{\omega}{\omega_n})^{1+\varepsilon}|\mathcal
             {F}_n\big)&\leq&\sum\limits_{k=0}\limits^{\infty}
             \mathbb{E}\big((M^*_n(\tilde{\omega})^\varepsilon\tilde{\omega}
             \chi_{\{\tilde{\tau}_k<\infty\}\cap\{\tilde{\tau}_{k+1}=\infty\}}|\mathcal
             {F}_n\big)\\
&\leq&\frac{1}{2^\varepsilon}\sum\limits_{k=0}\limits^{\infty}
             2^{(k+1)\varepsilon L}\mathbb{E}\big(\tilde{\omega}
             \chi_{\{\tilde{\tau}_k<\infty\}\cap\{\tilde{\tau}_{k+1}=\infty\}}|\mathcal
             {F}_n\big)\\
&=&\frac{1}{2^\varepsilon}\sum\limits_{k=0}\limits^{\infty}
             2^{(k+1)\varepsilon L}\mathbb{E}_{\tilde{\omega}}\big(
             \chi_{\{\tilde{\tau}_k<\infty\}\cap\{\tilde{\tau}_{k+1}=\infty\}}|\mathcal
             {F}_n\big)\\
&\leq&\frac{1}{2^\varepsilon}\sum\limits_{k=0}\limits^{\infty}
             2^{(k+1)\varepsilon L}\mathbb{E}_{\tilde{\omega}}\big(
             \chi_{\{\tilde{\tau}_k<\infty\}}|\mathcal
             {F}_n\big)\\
&\leq&\frac{1}{2^\varepsilon}\sum\limits_{k=0}\limits^{\infty}
             2^{(k+1)\varepsilon L}\beta^{k-1}\\
&=&\frac{2^{\varepsilon
(L-1)}}{\beta}\sum\limits_{k=0}\limits^{\infty}
             (2^{\varepsilon L}\beta)^k.\end{eqnarray*}
Choosing an $\varepsilon$ small enough, we have
$\sum\limits_{k=0}\limits^{\infty}
             (2^{\varepsilon L}\beta)^k<\infty.$ Thus, \eqref{RH} is
valid with $C=\frac{2^{\varepsilon
(L-1)}}{\beta}\sum\limits_{k=0}\limits^{\infty}
             (2^{\varepsilon L}\beta)^k$ and $q=1+\varepsilon.$

$\ref{Thm:equa_RH}\Rightarrow\ref{Thm:equa_RR}.$ For $A\in\mathcal {F},$ it is clear that
$\mathbb{E}_\omega(\chi_{A}|\mathcal
{F}_n)\omega_n=E(\omega\chi_{A}|\mathcal {F}_n).$ Applying
H\"{o}lder's inequality, we obtain that
$$\mathbb{E}_\omega(\chi_{A}|\mathcal {F}_n)\omega_n
\leq \mathbb{E}(\omega^{1+\varepsilon}|\mathcal
{F}_n)^{\frac{1}{1+\varepsilon}}\mathbb{E}(\chi_{A}|\mathcal
{F}_n)^{\frac{\varepsilon}{1+\varepsilon}},$$
where $\varepsilon=q-1>0.$
It follows from
\eqref{RH} that $$\mathbb{E}_\omega(\chi_{A}|\mathcal {F}_n)\leq
C^{\frac{1}{1+\varepsilon}}\mathbb{E}(\chi_{A}|\mathcal
{F}_n)^{\frac{\varepsilon}{1+\varepsilon}},$$ which implies
\eqref{RR}  with
$\varepsilon'=\frac{\varepsilon}{1+\varepsilon}.$

$\ref{Thm:equa_RR}\Rightarrow\ref{Thm:equa_rR}.$ For $\varepsilon'$ and $C$ in the assumption \ref{Thm:equa_RR}, we fix $\alpha'$ small enough such that
$\beta'=C\alpha'^{\varepsilon'}<1.$ For $B\in\mathcal {F}$ with
$\mathbb{E}(\chi_B|\mathcal {F}_n)\leq\alpha',$ it follows from \eqref{RR}
that$$\mathbb{E}_\omega(\chi_B|\mathcal {F}_n)\leq\beta'.$$ Thus for all $A\in \mathcal{F}$ we have
$$\mathbb{E}(\chi_A|\mathcal {F}_n)>1-\alpha'\Rightarrow
\mathbb{E}_\omega(\chi_A|\mathcal {F}_n)>1-\beta'.$$
Let $\alpha=1-\beta'$ and $\beta=1-\alpha'.$ Then
$$\mathbb{E}(\chi_A|\mathcal {F}_n)>\beta\Rightarrow
\mathbb{E}_\omega(\chi_A|\mathcal {F}_n)>\alpha,$$
which is equivalent to \ref{Thm:equa_rR}.

$\ref{Thm:equa_rR}\xRightarrow{S}\ref{Thm:equa_Ap}.$ Let $\omega_1=1/\omega.$ Then
$\omega_1$ is a weight relative to $\omega d\mu.$ Recalling the definition of $\mathbb{E}_{\omega}(\cdot|\mathcal {F}_n),$ we deduce that
$$\mathbb{E}_{\omega}(\cdot\omega_1|\mathcal
{F}_n)/\mathbb{E}_{\omega}(\omega_1|\mathcal
{F}_n)=\mathbb{E}(\cdot|\mathcal {F}_n),~\forall ~n\in \mathbb{N}.$$
It follows from \eqref{rR} that
$$\mathbb{E}_{\omega}(\chi_A|\mathcal {F}_n)\leq\alpha<1\Rightarrow
\mathbb{E}_{\omega}(\chi_A\omega_1|\mathcal
{F}_n)/\mathbb{E}_{\omega}(\omega_1|\mathcal
{F}_n)\leq\beta<1,~\forall~A\in
\mathcal{F}.$$ Combining with $\omega_1\in S_{(\omega d\mu)},$ similar to
$\ref{Thm:equa_R}\xRightarrow{S}\ref{Thm:equa_RH},$ we have $\varepsilon=q-1$ and $C$ such that
$$\mathbb{E}_{\omega}(\omega_1^{1+\varepsilon}|\mathcal {F}_n)\leq
C\mathbb{E}_{\omega}(\omega_1|\mathcal {F}_n)^{1+\varepsilon},~\forall~n\in
\mathbb{N},$$ that is, $$\omega_n\mathbb{E}(\omega^{-\varepsilon}|\mathcal
{F}_n)^{\frac{1}{\varepsilon}}\leq
C^{\frac{1}{\varepsilon}},~\forall~n\in \mathbb{N}.$$ Thus \eqref{Ap} is
valid with
$p=1+\frac{1}{\varepsilon}>1.$
\end{proof}

\section{$A_{\infty}$ Weights without Additional Assumptions}\label{without}
In this section, we study relations between different characterizations of $A_{\infty}$ weights without additional assumptions. These relations are showed in Figure \ref{figure}.

\begin{center}
  \begin{figure}[H]
	\begin{tikzpicture}[scale=0.85]
    \draw (1,5) node {$\boxed{A^{CF}_{\infty}}$} -- (1,5);
	\draw (4,-1) node {$\boxed{A^{con}_{\infty}}$} -- (4,-1);
    \draw (4,3) node {$\boxed{A^{log}_\infty}$} -- (4,3);
	\draw (4,5) node {$\boxed{\cup_{q>1}RH_q}$} -- (4,5);
    \draw (4,7) node {$\boxed{A^{\lambda}_{\infty}}$} -- (4,7);
    \draw (7,-1) node {$\boxed{A^{M}_\infty}$} -- (7,-1);
    \draw (7,1) node {$\boxed{A^{*}_\infty}$} -- (7,1);
	\draw (10,3) node {$\boxed{A^{med}_{\infty}}$} -- (10,3);
	\draw (10,5) node {$\boxed{A_{\infty}^{exp}}$} -- (10,5);	
    \draw (10,7) node {$\boxed{\cup_{p>1}A_p}$} -- (10,7);
    \draw (13,5) node {$\boxed{A_{\infty}^{SW}}$} -- (13,5);

    \draw[implies-implies,double equal sign distance, line width=0.25mm] (4.7,-1) -- (6.4,-1);
    \draw[implies-implies,double equal sign distance, line width=0.25mm] (1.7,5) -- (2.8,5);
    \draw[implies-implies,double equal sign distance, line width=0.25mm] (10.7,5) -- (12.3,5);

    \draw[-implies,double equal sign distance, line width=0.25mm] (4,4.5) -- (4,3.5);
	\draw[-implies,double equal sign distance, line width=0.25mm] (4,6.5) -- (4,5.5);
	\draw[-implies,double equal sign distance, line width=0.25mm] (4,2.5) -- (6.9,-0.5);

    \draw[-implies,double equal sign distance, line width=0.25mm] (4.7,2.5) -- (6.5,1.5);

    \draw[-implies,double equal sign distance, line width=0.25mm] (5.2,4.5) -- (6.9,1.5);

	\draw[-implies,double equal sign distance, line width=0.25mm] (8.95,6.5) -- (7.1,1.5);
	\draw[-implies,double equal sign distance, line width=0.25mm] (9.3,4.5)-- (7.5,1.5);

	\draw[-implies,double equal sign distance, line width=0.25mm] (10,4.5) -- (10,3.5);
	\draw[-implies,double equal sign distance, line width=0.25mm] (10,6.5) -- (10,5.5);
	\draw[-implies,double equal sign distance, line width=0.25mm] (10,2.5) -- (7.1,-0.5);
\draw (0,0) node {} -- (0,0);
	\draw (0,7.5) node {} -- (0,7.5);
	\end{tikzpicture}
	\caption{Relations without additional assumptions}
	\label{figure}
\end{figure}
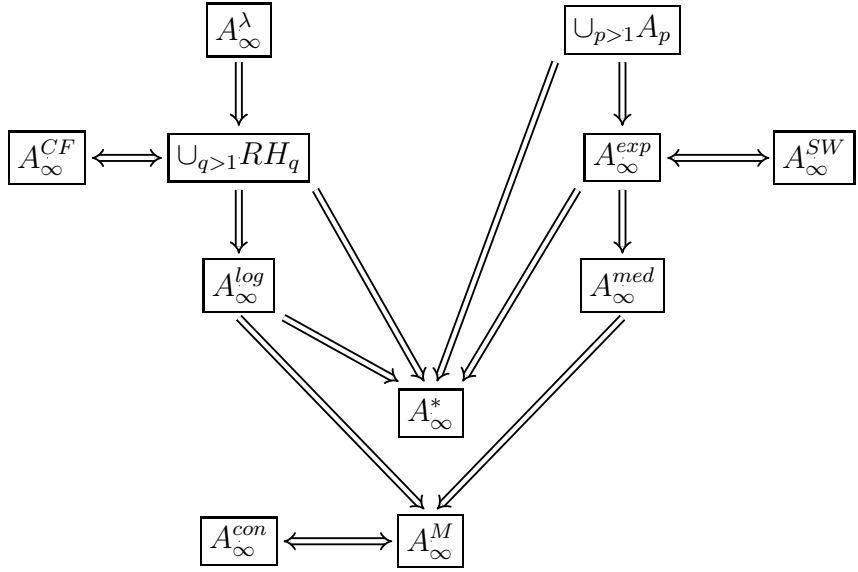
\end{center}

We first prove equivalent characterizations in Theorems \ref{thm:Reverse-test}, \ref{thm:Cond} and \ref{thm:exp-s}.

\begin{proof}[Proof of Theorem \ref{thm:Reverse-test}]
\eqref{Reverse-test1}$\Rightarrow$\eqref{Reverse-test2} This is $\ref{Thm:equa_RH}\Rightarrow\ref{Thm:equa_RR}$ in the proof of Theorem \ref{Thm:equa}. It holds without the additional assumption $\omega\in S.$

\eqref{Reverse-test2}$\Rightarrow$ \eqref{Reverse-test1} Let $B\in \mathcal {F}_n$ and let $E_{\lambda}=\{\frac{\omega}{\omega_n}>\lambda\}.$ We have
\begin{eqnarray*}
\lambda\mu(B\cap E_{\lambda})
&\leq&\int_{B\cap E_{\lambda}}\frac{\omega}{\omega_n}d\mu\\
&=&\int_{B}\frac{\omega\chi_{E_{\lambda}}}{\omega_n}d\mu\\
&=&\int_{B}\frac{\mathbb{E}(\omega\chi_{E_{\lambda}}|\mathcal
{F}_n)}{\omega_n}d\mu\\
&=&\int_{B}\mathbb{E}_{\omega}(\chi_{E_{\lambda}}|\mathcal
{F}_n)d\mu.
\end{eqnarray*}
It follows from \eqref{Reverse-test2} and H\"{o}lder's inequality that
\begin{eqnarray*}
\lambda\mu(B\cap E_{\lambda})
&\leq&C\int_{B}\mathbb{E}(\chi_{E_{\lambda}}|\mathcal{F}_n)^{\varepsilon'}d\mu\\
&\leq&C\big(\int_{B}\mathbb{E}(\chi_{E_{\lambda}}|\mathcal{F}_n)d\mu\big)^{\varepsilon'}\mu(B)^{1-\varepsilon'}\\
&=&C\mu(B\cap E_{\lambda})^{\varepsilon'}\mu(B)^{1-\varepsilon'}.
\end{eqnarray*}
Thus, we have $\lambda^{\frac{1}{1-\varepsilon'}}\mu(B\cap E_{\lambda})\leq C^{\frac{1}{1-\varepsilon'}}\mu(B).$

Let $1<q<\frac{1}{1-\varepsilon'}.$ We obtain that
\begin{eqnarray*}
\int_{B}(\frac{\omega}{\omega_n})^q\mu
&=&q\int_{0}^{+\infty}\lambda^{q-1}\mu(B\cap E_{\lambda})d\lambda\\
&=&q\int_{0}^{1}\lambda^{q-1}\mu(B\cap E_{\lambda})d\lambda
+q\int_{1}^{+\infty}\lambda^{q-1}\mu(B\cap E_{\lambda})d\lambda\\
&\leq&q(\int_{0}^{1}\lambda^{q-1}d\lambda)\mu(B)
+qC^{\frac{1}{1-\varepsilon'}}(\int_{1}^{+\infty}\lambda^{q-1-\frac{1}{1-\varepsilon'}}d\lambda)\mu(B)\\
&=&\mu(B)+\frac{C^{\frac{1}{1-\varepsilon'}}q}{\frac{1}{1-\varepsilon'}-q}\mu(B)\\
&=&(1+\frac{C^{\frac{1}{1-\varepsilon'}}q}{\frac{1}{1-\varepsilon'}-q})\mu(B).
\end{eqnarray*}
Since $B$ is arbitrary, we obtain
$$\mathbb{E}(\omega^q|\mathcal {F}_n)(\frac{1}{\omega_n})^q
\leq 1+\frac{C^{\frac{1}{1-\varepsilon'}}q}{\frac{1}{1-\varepsilon'}-q}.$$
Then $\mathbb{E}(\omega^q|\mathcal {F}_n)\leq (1+\frac{C^{\frac{1}{1-\varepsilon'}}q}{\frac{1}{1-\varepsilon'}-q})(\omega_n)^q.$
\end{proof}

\begin{proof}[Proof of Theorem \ref{thm:Cond}]
Because \eqref{thm:Cond1}$\Rightarrow$\eqref{thm:Cond2} is $\ref{Thm:equa_wAinfty}\Rightarrow\ref{Thm:equa_R}$ in the proof of Theorem \ref{Thm:equa} without the further
assumption $\omega\in S,$
it suffices to prove \eqref{thm:Cond2}$\Rightarrow$
\eqref{thm:Cond1}. Let $0<\gamma <1-\beta.$ Setting $E=\{\omega\leq\gamma\omega_n\},$ we have
\begin{eqnarray*}
\mathbb{E}_\omega(\chi_{E}|\mathcal {F}_n)&=&\frac{\mathbb{E}(\omega\chi_{E}|\mathcal {F}_n)}{\omega_n}\\
&\leq&\frac{\mathbb{E}(\gamma\omega_n\chi_{E}|\mathcal {F}_n)}{\omega_n}\\
&\leq&\gamma\mathbb{E}(\chi_{E}|\mathcal {F}_n)\\
&\leq&\gamma<1-\beta.
\end{eqnarray*}
It follows that $\mathbb{E}_\omega(\chi_{E^c}|\mathcal {F}_n)>\beta.$ In view of \eqref{thm:Cond2}, we obtain that $$\mathbb{E}(\chi_{E^c}|\mathcal {F}_n)>\alpha,$$
which implies $\mathbb{E}(\chi_{E}|\mathcal {F}_n)\leq1-\alpha.$
Thus $\eqref{thm:Cond1}$ is valid with $1-\alpha<\delta<1.$
\end{proof}

\begin{proof}[Proof of Theorem \ref{thm:exp-s}]
In view of Lemma \ref{key_lemma_sta}, we have that
\begin{equation*}
\mathbb{E}(\omega^s|\mathcal {F}_n)^{\frac{1}{s}}\downarrow\exp \mathbb{E}(\log\omega|\mathcal {F}_n),~\text{~as~}s\downarrow0^+,
\end{equation*}
which establishes the
equivalence between \eqref{thm:exp-s2} and \eqref{thm:exp-s1}.
\end{proof}

In the rest of this section, we prove Theorems \ref{thm:imp1}, \ref{thm:imp2} and \ref{thm:Wi}.
These are one-way implications.

\begin{proof}[Proof of Theorem \ref{thm:imp1}]
$\eqref{thm:lev}\Rightarrow\eqref{thm:rev}$
Let $B\in \mathcal {F}_n.$ We have
\begin{eqnarray*}
\int_B(\frac{\omega}{\omega_n})^{1+\delta}d\mu
&=&\int_B(\frac{\omega}{\omega_n})^{\delta}\frac{\omega}{\omega_n}d\mu\\
&=&\delta\int_0^{+\infty}\lambda^{\delta-1}
\frac{\omega}{\omega_n}(B\cap\{\frac{\omega}{\omega_n}>\lambda\})d\lambda\\
&=&\delta\int_0^{1}\lambda^{\delta-1}
\frac{\omega}{\omega_n}(B\cap\{\frac{\omega}{\omega_n}>\lambda\})d\lambda\\
&&+\delta\int_1^{+\infty}\lambda^{\delta-1}
\frac{\omega}{\omega_n}(B\cap\{\frac{\omega}{\omega_n}>\lambda\})d\lambda.
\end{eqnarray*}
It follows that
\begin{eqnarray*}
\delta\int_0^{1}\lambda^{\delta-1}
\frac{\omega}{\omega_n}(B\cap\{\frac{\omega}{\omega_n}>\lambda\})d\lambda
&\leq&\frac{\omega}{\omega_n}(B)\delta\int_0^{1}\lambda^{\delta-1}
d\lambda\\
&=&\mu(B),
\end{eqnarray*}
where we have used $\frac{\omega}{\omega_n}(B)
=\int_B\frac{\omega}{\omega_n}d\mu
=\int_B\frac{\omega_n}{\omega_n}d\mu=\mu(B).$

Using \eqref{thm:equ_lev}, we obtain the following estimate
\begin{eqnarray*}
&~&\delta\int_{1}^{+\infty}\lambda^{\delta-1}
\frac{\omega}{\omega_n}(B\cap\{\frac{\omega}{\omega_n}>\lambda\})d\lambda\\
&\leq&C\delta\int_1^{+\infty}\lambda^{\delta}\mu(B\cap\{\frac{\omega}{\omega_n}>\beta\lambda\})
d\lambda\\
&=&\frac{C\delta}{\beta^{1+\delta}}\int_\beta^{+\infty}\lambda^{\delta}(B\cap\{\frac{\omega}{\omega_n}>\lambda\})
d\lambda\\
&\leq&\frac{C\delta}{(1+\delta)\beta^{1+\delta}}\int_B(\frac{\omega}{\omega_n})^{1+\delta}d\mu.
\end{eqnarray*}
Because of $\lim\limits_{\delta\rightarrow0}\frac{C\delta}{(1+\delta)\beta^{1+\delta}}=0,$
we can choose $\delta$ such that $\frac{C\delta}{(1+\delta)\beta^{1+\delta}}<\frac{1}{2}.$ Then we have
$$\int_B(\frac{\omega}{\omega_n})^{1+\delta}d\mu\leq2\mu(B).$$
Since $B\in\mathcal {F}_n$ is arbitrary, it follows that $$\mathbb{E}(\omega^{1+\delta}|\mathcal {F}_n)\leq2(\omega_n)^{1+\delta}.$$

$\eqref{thm:rev} \Rightarrow\eqref{thm:log}$
Let $E_k=\{2^k<\frac{\omega}{\omega_n}\leq2^{k+1}\}$ for $k\in \mathbb{N}.$ In view of \eqref{thm:rev}, we have
\begin{eqnarray*}
  2^{kq}\mathbb{E}(\chi_{E_k}|\mathcal {F}_n)&\leq&\mathbb{E}\big((\chi_{E_k}\frac{\omega}{\omega_n})^{q}|\mathcal {F}_n\big)\\&\leq&\mathbb{E}\big((\frac{\omega}{\omega_n})^{q}|\mathcal {F}_n\big)\\&\leq& C,
\end{eqnarray*}
which implies $\mathbb{E}(\chi_{E_k}|\mathcal {F}_n)\leq C2^{-kq}.$
It follows that
\begin{eqnarray*}
\mathbb{E}_{\omega}(\log^+\frac{\omega}{\omega_n}|\mathcal {F}_n)
&=&\mathbb{E}_{\omega}(\sum\limits_{k=0}^{+\infty}\chi_{E_k}\log^+\frac{\omega}{\omega_n}|\mathcal {F}_n)\\
&=&\sum\limits_{k=0}^{+\infty}\mathbb{E}_{\omega}(\chi_{E_k}\log^+\frac{\omega}{\omega_n}|\mathcal {F}_n)\\
&=&\sum\limits_{k=0}^{+\infty}\mathbb{E}(\chi_{E_k}\frac{\omega}{\omega_n}\log^+\frac{\omega}{\omega_n}|\mathcal {F}_n)\\
&\leq&\sum\limits_{k=0}^{+\infty}2^{k+1} (k+1)\mathbb{E}(\chi_{E_k}|\mathcal {F}_n)\\
&\leq&C\sum\limits_{k=0}^{+\infty}(k+1)2^{k+1}2^{-kq},
\end{eqnarray*}
where the series $\sum\limits_{k=0}^{+\infty}(k+1)2^{k+1}2^{-kq}$ is convergent. Then we have \eqref{thm:equ_log}.

$\eqref{thm:log}\Rightarrow\eqref{thm:imp_Cond2}$
Let $\mathbb{E}(\chi_A|\mathcal {F}_n)\leq\alpha<1.$ Recall that $ab\leq a\log a-a+e^b$ where $a>1$ and $b\geq0.$
Then
\begin{eqnarray*}
\mathbb{E}_\omega(\chi_A|\mathcal {F}_n)
&=& \mathbb{E}_\omega(\chi_{A\cap\{\frac{\omega}{\omega_n}\leq1\}}|\mathcal {F}_n)+ \mathbb{E}_\omega(\chi_{A\cap\{\frac{\omega}{\omega_n}>1\}}|\mathcal {F}_n)\\
&=& \mathbb{E}(\frac{\omega}{\omega_n}\chi_{A\cap\{\frac{\omega}{\omega_n}\leq1\}}|\mathcal {F}_n)+ \mathbb{E}(\frac{\omega}{\omega_n}\chi_{A\cap\{\frac{\omega}{\omega_n}>1\}}|\mathcal {F}_n)\\
&\leq&
\mathbb{E}(\chi_A|\mathcal {F}_n)+\frac{1}{b+1}\mathbb{E}(\frac{\omega}{\omega_n}\log^+\frac{\omega}{\omega_n}+e^b\chi_A|\mathcal {F}_n)\\
&=&
\mathbb{E}(\chi_A|\mathcal {F}_n)+\frac{1}{b+1}\mathbb{E}(\frac{\omega}{\omega_n}\log^+\frac{\omega}{\omega_n}|\mathcal{F}_n)+\frac{e^b}{b+1}\mathbb{E}(\chi_A|\mathcal {F}_n)\\
&\leq&\alpha(1+\frac{e^b}{b+1})+\frac{C}{b+1}.
\end{eqnarray*}
Setting $b=2C-1,$ we can pick an $\alpha$ small enough that
$\alpha(1+\frac{e^b}{b+1})\leq\frac{1}{4}$ because of $\lim\limits_{\alpha\rightarrow0}\alpha(1+\frac{e^b}{b+1})=0.$
Thus $\mathbb{E}_\omega(\chi_A|\mathcal {F}_n)\leq\frac{3}{4}.$
\end{proof}

\begin{proof}[Proof of Theorem \ref{thm:imp2}] It suffices to prove $\eqref{thm:imp2_Log} \Rightarrow\eqref{thm:imp2_Mid}\Rightarrow\eqref{thm:imp2_Dou},$ because $\eqref{thm:imp2_Ap}\Rightarrow\eqref{thm:imp2_Log}$ is the one $\ref{Thm:equa_Ap}\Rightarrow\ref{Thm:equa_A_exp_infty}$ in Theorem \ref{Thm:equa}.

$\eqref{thm:imp2_Log} \Rightarrow\eqref{thm:imp2_Mid}$
Let $E=\{\omega>m(\omega,n)\}.$ Using H\"{o}lder's inequality for the conditional expectation,
we have
\begin{eqnarray*}
\mathbb{E}(\omega^s\chi_E|\mathcal {F}_n)
&\leq&\mathbb{E}(\omega|\mathcal {F}_n)^s\mathbb{E}(\chi_E|\mathcal {F}_n)^{1-s}\\
&\leq&2^{s-1}C^s\mathbb{E}(\omega^s|\mathcal {F}_n),
\end{eqnarray*}
where we have used Theorem \ref{thm:exp-s}.
It follows that $\mathbb{E}(\omega^s\chi_E|\mathcal {F}_n)
\leq\frac{3}{4}\mathbb{E}(\omega^s|\mathcal {F}_n)$ provided $2^{s-1}C^s<\frac{3}{4}.$
Then $\mathbb{E}(\omega^s\chi_{E^c}|\mathcal {F}_n)
\geq\frac{1}{4}\mathbb{E}(\omega^s|\mathcal {F}_n).$ Thus
\begin{eqnarray*}
\frac{1}{4}\mathbb{E}(\omega|\mathcal {F}_n)^s &\leq&\frac{1}{4}C^s\mathbb{E}(\omega^s|\mathcal {F}_n)\\ &\leq&C^s\mathbb{E}(\omega^s\chi_{E^c}|\mathcal {F}_n)\\
&\leq&C^s\mathbb{E}(m(\omega,n)^s\chi_{E^c}|\mathcal {F}_n)\\
&\leq&C^s\mathbb{E}(m(\omega,n)^s|\mathcal {F}_n)\\
&=&C^sm(\omega,n)^s,
\end{eqnarray*}
which implies $\mathbb{E}(\omega|\mathcal {F}_n)\leq4^{\frac{1}{s}}Cm(\omega,n).$

$\eqref{thm:imp2_Mid}\Rightarrow\eqref{thm:imp2_Dou}$ Let $\alpha<\frac{1}{4}$ and $\mathbb{E}(\chi_A|\mathcal {F}_n)<\frac{1}{4}.$ We claim that $$\mathbb{E}(\chi_{A^c\cap\{\omega\geq m(\omega,n)\}}|\mathcal {F}_n)\geq\frac{1}{4}.$$
Indeed, we have
\begin{eqnarray*}\mathbb{E}(\chi_{A\cup\{\omega< m(\omega,n)\}}|\mathcal {F}_n)
&\leq&\mathbb{E}(\chi_A|\mathcal {F}_n)+\mathbb{E}(\chi_{\{\omega< m(\omega,n)\}}|\mathcal {F}_n)\\
&<&\frac{1}{4}+\frac{1}{2}=\frac{3}{4}.\end{eqnarray*}
This proves that $$\mathbb{E}(\chi_{A^c\cap\{\omega\geq m(\omega,n)\}}|\mathcal {F}_n)\geq\frac{1}{4}.$$
It follows that
\begin{eqnarray*}
\frac{1}{4}\omega_n
&\leq& \frac{C}{4}m(\omega,n)\\
&\leq& Cm(\omega,n)\mathbb{E}(\chi_{A^c\cap\{\omega\geq m(\omega,n)\}}|\mathcal {F}_n)\\
&=& C\mathbb{E}(m(\omega,n)\chi_{A^c\cap\{\omega\geq m(\omega,n)\}}|\mathcal {F}_n)\\
&\leq& C\mathbb{E}(\omega\chi_{A^c\cap\{\omega\geq m(\omega,n)\}}|\mathcal {F}_n)\\
&\leq& C\mathbb{E}(\omega\chi_{A^c}|\mathcal {F}_n).
\end{eqnarray*}
Then we have $\omega_n\leq 4C\mathbb{E}(\omega\chi_{A^c}|\mathcal {F}_n)$ which implies $1\leq 4C\mathbb{E}_{\omega}(\chi_{A^c}|\mathcal {F}_n).$ Thus $\mathbb{E}_{\omega}(\chi_{A}|\mathcal {F}_n)<\beta$ with $\beta=1-\frac{1}{4C}.$
\end{proof}

Before we prove Theorem \ref{thm:Wi}, we make a couple of observations on the tailed maximal operator which are Lemmas
\ref{doob_con} and \ref{doob_imp}. Lemma \ref{doob_con} is
the conditional version of Doob's inequality which appeared in \cite[p.189]{MR1301765}.

\begin{lemma}\label{doob_con} Let $p>1.$ There exists $C>1$ such that for all $n\in \mathbb{N}$ we have
$$\mathbb{E}(M^*_n(f)^{p}|\mathcal {F}_n)\leq C\mathbb{E}(f^{p}|\mathcal {F}_n).$$
\end{lemma}

Lemma \ref{doob_imp} shows that the tailed operator has the following local property.

\begin{lemma}\label{doob_imp} Let $f\in L^1.$ For all $\lambda>0$ and $n\in\mathbb{N}$ we have
$$\mu(B\cap\{M^*_n(f)>\lambda\}\leq\frac{2}{\lambda}\int_{B\cap\{|f|>\lambda/2\}}|f|d\mu,$$
where $B\in \mathcal{F}_n.$
\end{lemma}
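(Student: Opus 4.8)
The statement to prove is Lemma \ref{doob_imp}: a weak-type localization of the tailed maximal operator $M_n^*f = \sup_{m\ge n}\mathbb{E}(f|\mathcal{F}_m)$ (taking $f\ge 0$ WLOG, replacing $f$ by $|f|$). The plan is to run the standard stopping-time proof of the weak-$(1,1)$ inequality for the martingale maximal function, but relativized to the $\sigma$-field $\mathcal{F}_n$ and localized to a set $B\in\mathcal{F}_n$.

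First I would fix $\lambda>0$, $n\in\mathbb{N}$, and $B\in\mathcal{F}_n$, and define the stopping time $\tau = \inf\{m\ge n : \mathbb{E}(|f|\,|\,\mathcal{F}_m) > \lambda\}$, with $\tau = \infty$ if no such $m$ exists. Then $\{M_n^*f > \lambda\} = \{\tau < \infty\} = \bigcup_{m\ge n}\{\tau = m\}$, and the sets $\{\tau = m\}$ are disjoint with $\{\tau = m\}\in\mathcal{F}_m$. Intersecting with $B\in\mathcal{F}_n\subseteq\mathcal{F}_m$, on each piece I would use that $\mathbb{E}(|f|\,|\,\mathcal{F}_m) > \lambda$ there, so
\begin{equation*}
\lambda\,\mu(B\cap\{\tau=m\}) \le \int_{B\cap\{\tau=m\}}\mathbb{E}(|f|\,|\,\mathcal{F}_m)\,d\mu = \int_{B\cap\{\tau=m\}}|f|\,d\mu,
\end{equation*}
using that $B\cap\{\tau=m\}\in\mathcal{F}_m$. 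Summing over $m\ge n$ gives the crude bound $\lambda\,\mu(B\cap\{M_n^*f>\lambda\})\le\int_{B\cap\{M_n^*f>\lambda\}}|f|\,d\mu$.

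To get the sharper form with the truncation $\{|f|>\lambda/2\}$ and the factor $2$, I would split $|f| = |f|\chi_{\{|f|\le\lambda/2\}} + |f|\chi_{\{|f|>\lambda/2\}}$ inside the conditional expectation. On the set $\{\tau=m\}$, the contribution of the small part satisfies $\mathbb{E}(|f|\chi_{\{|f|\le\lambda/2\}}\,|\,\mathcal{F}_m)\le\lambda/2$, so the "bad" part must make up the rest: $\mathbb{E}(|f|\chi_{\{|f|>\lambda/2\}}\,|\,\mathcal{F}_m) > \lambda - \lambda/2 = \lambda/2$ on $\{\tau=m\}$. Hence
\begin{equation*}
\frac{\lambda}{2}\,\mu(B\cap\{\tau=m\}) \le \int_{B\cap\{\tau=m\}}\mathbb{E}(|f|\chi_{\{|f|>\lambda/2\}}\,|\,\mathcal{F}_m)\,d\mu = \int_{B\cap\{\tau=m\}\cap\{|f|>\lambda/2\}}|f|\,d\mu,
\end{equation*}
and summing over $m\ge n$ and bounding the right side by the integral over all of $B\cap\{|f|>\lambda/2\}$ yields $\tfrac{\lambda}{2}\,\mu(B\cap\{M_n^*f>\lambda\})\le\int_{B\cap\{|f|>\lambda/2\}}|f|\,d\mu$, which rearranges to the claimed inequality.

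I do not anticipate a serious obstacle here; this is a routine adaptation of the classical argument. The only point requiring mild care is the measurability bookkeeping — verifying that $\{\tau = m\}\in\mathcal{F}_m$ and that intersecting with $B\in\mathcal{F}_n$ keeps the set in $\mathcal{F}_m$, so that the defining property of conditional expectation can be applied to pull $\int_{B\cap\{\tau=m\}}\mathbb{E}(g|\mathcal{F}_m)\,d\mu = \int_{B\cap\{\tau=m\}}g\,d\mu$. One should also note that the countable sum over $m$ converges because the sets $\{\tau=m\}$ are pairwise disjoint and contained in a probability space, so everything is finite and the rearrangement is legitimate. An alternative, if one prefers to cite rather than re-derive, is to apply the classical weak-$(1,1)$ maximal inequality to the shifted filtration $(\mathcal{F}_{n+m})_{m\ge 0}$ together with the localization to $B$, but the direct stopping-time computation above is cleaner and self-contained.
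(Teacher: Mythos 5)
Your proof is correct, and it takes a somewhat different route from the paper's. Both arguments hinge on the same truncation of $|f|$ at height $\lambda/2$, but you run the classical stopping-time argument directly: on $\{\tau=m\}$ the bounded part $|f|\chi_{\{|f|\le\lambda/2\}}$ can contribute at most $\lambda/2$ to $\mathbb{E}(|f|\,|\,\mathcal{F}_m)$, so the tail part must exceed $\lambda/2$ there, and you integrate over $B\cap\{\tau=m\}\in\mathcal{F}_m$ and sum. The paper instead localizes first, using $B\in\mathcal{F}_n$ to write $\mu(B\cap\{M_n^*(f)>\lambda\})=\mu(\{M_n^*(f\chi_B)>\lambda\})$, splits $f\chi_B$ into the pieces supported on $\{|f|>\lambda/2\}$ and $\{|f|\le\lambda/2\}$, observes that the maximal function of the second piece never exceeds $\lambda/2$, and then invokes the standard weak $(1,1)$ inequality for the shifted filtration $(\mathcal{F}_{n+m})_{m\ge 0}$ as a black box --- precisely the alternative you mention at the end of your write-up. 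Your version is self-contained and makes the mechanism behind the factor $2$ and the truncation explicit; the paper's version is shorter because it delegates the stopping-time work to the cited weak-type inequality, at the cost of the extra observation that $M_n^*(f\chi_B)=\chi_B M_n^*(f)$ for $B\in\mathcal{F}_n$. Your measurability bookkeeping ($\{\tau=m\}\in\mathcal{F}_m$, $B\cap\{\tau=m\}\in\mathcal{F}_m$) is exactly the point that needs care, and you handle it correctly.
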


\begin{proof} [Proof of Lemma \ref{doob_imp}] Because of $B\in \mathcal{F}_n,$ we have
\begin{eqnarray*}
&~&\mu(B\cap\{M^*_n(f)>\lambda\}\\&=&\mu(\{M^*_n(f\chi_{B})>\lambda\})\\
&\leq&\mu(\{M^*_n(f\chi_{B\cap\{|f|>\lambda/2\}})>\lambda/2\})+\mu(\{M^*_n(f\chi_{B\cap\{|f|\leq\lambda/2\}})>\lambda/2\})\\
&=&\mu(\{M^*_n(f\chi_{B\cap\{|f|>\lambda/2\}})>\lambda/2\}).
\end{eqnarray*}

For $m\in \mathbb{N},$ denote
$\tilde{\mathcal {F}}_m=:\mathcal {F}_{n+m}$ and $\tilde{f}_m=:\mathbb{E}(f\chi_{B\cap\{|f|>\lambda/2\}}|\mathcal {F}_{n+m}).$
It follows that $(\tilde{f}_m)_{m\geq0}$ is a martingale with respect
 to $(\Omega,\mathcal{F},\mu,(\tilde{\mathcal{F}}_m)_{m\geq0}),$ which leads to
$\tilde{M}(\cdot)=M^*_n(\cdot).$ Using the weak $(1,1)$ type inequality for $\tilde{M}(\cdot),$ we have
\begin{eqnarray*}
\mu(\{M^*_n(f\chi_{B\cap\{|f|>\lambda/2\}})>\lambda/2\})
&\leq&\frac{2}{\lambda}\int_{B\cap\{|f|>\lambda/2\}}|f|d\mu,
\end{eqnarray*}
which completes the proof.
\end{proof}

Using these Lemmas, we give the proof of Theorem \ref{thm:Wi}.

\begin{proof}[Proof of Theorem \ref{thm:Wi}] In each case, we show that there exists $C>1$ such that
$$\int_{B}M^*_n(\omega)d\mu\leq C\int_{B}\omega d\mu,$$
for all $B\in \mathcal {F}_n.$ Since B is arbitrary, this proves $\mathbb{E}(M^*_n(\omega)|\mathcal {F}_n)\leq C\omega_n.$

\eqref{thm:Wi_Ap}
Because $\omega\in A_p,$ we have $\omega^{1-p^\prime}\in A_{p^\prime}$ with $1/p+1/p'=1.$ Recall that $M^*_n$ is bounded on $L^{p^\prime}(\omega^{1-p^\prime})$ (\cite[Theorem 6.6.3]{MR1224450}). Then
\begin{eqnarray*}
\int_{B}M^*_n(\omega)d\mu
&=&\int_{B}M^*_n(\omega)\omega^{-\frac{1}{p}}\omega^{\frac{1}{p}}d\mu\\
&\leq&(\int_{B}M^*_n(\omega)^{p^\prime}\omega^{-\frac{1}{p-1}}d\mu)^{\frac{1}{p^{\prime}}}
(\int_{B}\omega d\mu)^{\frac{1}{p}}\\
&\leq&C(\int_{B}\omega^{p^\prime}\omega^{-\frac{1}{p-1}}d\mu)^{\frac{1}{p^{\prime}}}
(\int_{B}\omega d\mu)^{\frac{1}{p}}\\
&=&C(\int_{B}\omega d\mu)^{\frac{1}{p^{\prime}}}
(\int_{B}\omega d\mu)^{\frac{1}{p}}\\
&\leq&C\int_{B}\omega d\mu.
\end{eqnarray*}

\eqref{thm:Wi_rev} By Jensen's inequality for the
conditional expectation, we have
\begin{eqnarray*}
\int_{B}M^*_n(\omega)d\mu
&=&\int_{B}\mathbb{E}(M^*_n(\omega)|\mathcal {F}_n)d\mu\\
&\leq&\int_{B}\mathbb{E}(M^*_n(\omega)^{p}|\mathcal {F}_n)^{\frac{1}{p}}d\mu.
\end{eqnarray*}
Because of the conditional version of Doob's inequality (Lemma \ref{doob_con}) and $\omega\in\bigcup\limits_{1<q<\infty} RH_q,$ we obtain that
\begin{eqnarray*}
\int_{B}\mathbb{E}(M^*_n(\omega)^{p}|\mathcal {F}_n)^{\frac{1}{p}}d\mu
&\leq&C\int_{B}\mathbb{E}(\omega^{p}|\mathcal {F}_n)^{\frac{1}{p}}d\mu\\
&\leq&C\int_{B}\omega d\mu.
\end{eqnarray*}
Thus
$$\int_{B}M^*_n(\omega)d\mu\leq C\int_{B}\omega d\mu.$$

\eqref{thm:Wi_log} In view of Lemma \ref{doob_imp}, we have the following estimate
\begin{eqnarray*}
\int_{B}M^*_n(\frac{\omega}{\omega_n})d\mu
&=&\int_0^{+\infty}\mu(B\cap\{M^*_n(\frac{\omega}{\omega_n})>\lambda\}
)d\lambda\\
&\leq&\int_0^{2}\mu(B)d\lambda+\int_2^{+\infty}\mu(B\cap\{M^*_n(\frac{\omega}{\omega_n})>\lambda\}
)d\lambda\\
&\leq&2\mu(B)+\int_2^{+\infty}\frac{2}{\lambda}\int_{B\cap\{ \frac{\omega}{\omega_n}>\frac{\lambda}{2}\}}\frac{\omega}{\omega_n}d\mu d\lambda\\
&=&2\mu(B)+2\int_1^{+\infty}\frac{1}{\lambda}\int_{B\cap\{\frac{\omega}{\omega_n}>\lambda\}}\frac{\omega}{\omega_n}d\mu d\lambda\\
&=&2\mu(B)+2\int_B\frac{\omega}{\omega_n}\log^+\frac{\omega}{\omega_n}d\mu\\
&=&2\mu(B)+2\int_B\mathbb{E}_{\omega}(\log^+\frac{\omega}{\omega_n}|\mathcal {F}_n)d\mu.
\end{eqnarray*}
It follows from \eqref{thm:Wi_log} that
$$\int_{B}M^*_n(\frac{\omega}{\omega_n})d\mu\leq C\mu(B).$$
Thus $\mathbb{E}\big(M^*_n(\frac{\omega}{\omega_n})|\mathcal {F}_n\big)\leq C,$ which implies
$\mathbb{E}\big(M^*_n(\omega)|\mathcal {F}_n\big)\leq C\omega_n.$

\eqref{Thm:Wi_exp} In view of Theorem \ref{thm:exp-s},
there exists $C>1$ such that for every $s\in(0,1)$ we have
$$
\mathbb{E}(\omega|\mathcal {F}_n)\leq C\mathbb{E}(\omega^s|\mathcal {F}_n)^{\frac{1}{s}}.
$$
Then $M^*_n(\omega)\leq C\cdot M^*_n(\omega^s)^{\frac{1}{s}}.$ Let $p=:\frac{1}{s}.$ Using Doob's inequality,
we have
\begin{eqnarray*}
\int_{B}M^*_n(\omega)d\mu
&\leq&C\int_{B}M^*_n(\omega^s)^{\frac{1}{s}}d\mu\\
&\leq&C(p')^p\int_{B}\omega d\mu,
\end{eqnarray*}
where $\frac{1}{p}+\frac{1}{p'}=1.$
Following from $(p')^p\downarrow e$ as $s\downarrow0,$ we obtain
$$\int_{B}M^*_n(\omega)d\mu
\leq C e \int_{B}\omega d\mu.$$
\end{proof}

\section*{Acknowledgements}The authors thank the referees for many valuable comments
and suggestions. These greatly improved the presentation of our results.

\bibliographystyle{alpha,amsplain}	
%\bibliography{infty}
% \bib, bibdiv, biblist are defined by the amsrefs package.
\begin{bibdiv}
\begin{biblist}

\bib{MR544802}{incollection}{
      author={Bonami, Aline},
      author={L\'{e}pingle, Dominique},
       title={Fonction maximale et variation quadratique des martingales en
  pr\'{e}sence d'un poids},
        date={1979},
   booktitle={S\'{e}minaire de {P}robabilit\'{e}s, {XIII} ({U}niv.
  {S}trasbourg, {S}trasbourg, 1977/78)},
      series={Lecture Notes in Math.},
      volume={721},
   publisher={Springer, Berlin},
       pages={294\ndash 306},
      review={\MR{544802}},
}

\bib{MR1301765}{article}{
      author={Chang, Xiang-Qian},
       title={Some {S}awyer type inequalities for martingales},
        date={1994},
        ISSN={0039-3223},
     journal={Studia Math.},
      volume={111},
      number={2},
       pages={187\ndash 194},
         url={https://doi.org/10.4064/sm-111-2-187-194},
      review={\MR{1301765}},
}

\bib{MR358205}{article}{
      author={Coifman, Ronald R.},
      author={Fefferman, Charles},
       title={Weighted norm inequalities for maximal functions and singular
  integrals},
        date={1974},
        ISSN={0039-3223},
     journal={Studia Math.},
      volume={51},
       pages={241\ndash 250},
         url={https://doi.org/10.4064/sm-51-3-241-250},
      review={\MR{358205}},
}

\bib{MR544804}{incollection}{
      author={Dol\'{e}ans-Dade, Catherine},
      author={Meyer, Paul-Andre},
       title={In\'{e}galit\'{e}s de normes avec poids},
        date={1979},
   booktitle={S\'{e}minaire de {P}robabilit\'{e}s, {XIII} ({U}niv.
  {S}trasbourg, {S}trasbourg, 1977/78)},
      series={Lecture Notes in Math.},
      volume={721},
   publisher={Springer, Berlin},
       pages={313\ndash 331},
      review={\MR{544804}},
}

\bib{MR3473651}{article}{
      author={Duoandikoetxea, Javier},
      author={Mart\'{\i}n-Reyes, Francisco~J.},
      author={Ombrosi, Sheldy},
       title={On the {$A_\infty$} conditions for general bases},
        date={2016},
        ISSN={0025-5874},
     journal={Math. Z.},
      volume={282},
      number={3-4},
       pages={955\ndash 972},
         url={https://doi.org/10.1007/s00209-015-1572-y},
      review={\MR{3473651}},
}

\bib{MR481968}{article}{
      author={Fujii, Nobuhiko},
       title={Weighted bounded mean oscillation and singular integrals},
        date={1977/78},
        ISSN={0025-5513},
     journal={Math. Japon.},
      volume={22},
      number={5},
       pages={529\ndash 534},
      review={\MR{481968}},
}

\bib{MR807149}{book}{
      author={Garc\'{\i}a-Cuerva, Jos\'{e}},
      author={Rubio~de Francia, Jos\'{e}~L.},
       title={Weighted norm inequalities and related topics},
      series={North-Holland Mathematics Studies},
   publisher={North-Holland Publishing Co., Amsterdam},
        date={1985},
      volume={116},
        ISBN={0-444-87804-1},
        note={Notas de Matem\'{a}tica [Mathematical Notes], 104},
      review={\MR{807149}},
}

\bib{MR402038}{article}{
      author={Gehring, Frederick W.},
       title={The {$L^{p}$}-integrability of the partial derivatives of a
  quasiconformal mapping},
        date={1973},
        ISSN={0001-5962},
     journal={Acta Math.},
      volume={130},
       pages={265\ndash 277},
         url={https://doi.org/10.1007/BF02392268},
      review={\MR{402038}},
}

\bib{MR3243734}{book}{
      author={Grafakos, Loukas},
       title={Classical {F}ourier analysis},
     edition={Third},
      series={Graduate Texts in Mathematics},
   publisher={Springer, New York},
        date={2014},
      volume={249},
        ISBN={978-1-4939-1193-6; 978-1-4939-1194-3},
         url={https://doi.org/10.1007/978-1-4939-1194-3},
      review={\MR{3243734}},
}

\bib{MR727244}{article}{
      author={Hru\v{s}\v{c}ev, Sergei~V.},
       title={A description of weights satisfying the {$A_{\infty }$} condition
  of {M}uckenhoupt},
        date={1984},
        ISSN={0002-9939},
     journal={Proc. Amer. Math. Soc.},
      volume={90},
      number={2},
       pages={253\ndash 257},
         url={https://doi.org/10.2307/2045350},
      review={\MR{727244}},
}

\bib{MR3092729}{article}{
      author={Hyt\"{o}nen, Tuomas},
      author={P\'{e}rez, Carlos},
       title={Sharp weighted bounds involving {$A_\infty$}},
        date={2013},
        ISSN={2157-5045},
     journal={Anal. PDE},
      volume={6},
      number={4},
       pages={777\ndash 818},
         url={https://doi.org/10.2140/apde.2013.6.777},
      review={\MR{3092729}},
}

\bib{MR436313}{article}{
      author={Izumisawa, Masataka},
      author={Kazamaki, Norihiko},
       title={Weighted norm inequalities for martingales},
        date={1977},
        ISSN={0040-8735},
     journal={Tohoku Math. J. (2)},
      volume={29},
      number={1},
       pages={115\ndash 124},
         url={https://doi.org/10.2748/tmj/1178240700},
      review={\MR{436313}},
}

\bib{MR4446233}{article}{
      author={Kosz, Dariusz},
       title={{$A_\infty$} condition for general bases revisited: complete
  classification of definitions},
        date={2022},
        ISSN={0002-9939},
     journal={Proc. Amer. Math. Soc.},
      volume={150},
      number={9},
       pages={3831\ndash 3839},
         url={https://doi.org/10.1090/proc/16014},
      review={\MR{4446233}},
}

\bib{MR2721744}{article}{
      author={Lerner, Andrei~K.},
       title={A pointwise estimate for the local sharp maximal function with
  applications to singular integrals},
        date={2010},
        ISSN={0024-6093},
     journal={Bull. Lond. Math. Soc.},
      volume={42},
      number={5},
       pages={843\ndash 856},
         url={https://doi.org/10.1112/blms/bdq042},
      review={\MR{2721744}},
}

\bib{MR1224450}{book}{
      author={Long, Rui~Lin},
       title={Martingale spaces and inequalities},
   publisher={Peking University Press, Beijing; Friedr. Vieweg \& Sohn,
  Braunschweig},
        date={1993},
        ISBN={7-301-02069-4},
         url={https://doi.org/10.1007/978-3-322-99266-6},
      review={\MR{1224450}},
}

\bib{MR293384}{article}{
      author={Muckenhoupt, Benjamin},
       title={Weighted norm inequalities for the {H}ardy maximal function},
        date={1972},
        ISSN={0002-9947},
     journal={Trans. Amer. Math. Soc.},
      volume={165},
       pages={207\ndash 226},
         url={https://doi.org/10.2307/1995882},
      review={\MR{293384}},
}

\bib{MR350297}{article}{
      author={Muckenhoupt, Benjamin},
       title={The equivalence of two conditions for weight functions},
        date={1973/74},
        ISSN={0039-3223},
     journal={Studia Math.},
      volume={49},
       pages={101\ndash 106},
         url={https://doi.org/10.4064/sm-49-2-101-106},
      review={\MR{350297}},
}

\bib{MR529683}{article}{
      author={Str\"{o}mberg, Jan-Olov},
       title={Bounded mean oscillation with {O}rlicz norms and duality of
  {H}ardy spaces},
        date={1979},
        ISSN={0022-2518},
     journal={Indiana Univ. Math. J.},
      volume={28},
      number={3},
       pages={511\ndash 544},
         url={https://doi.org/10.1512/iumj.1979.28.28037},
      review={\MR{529683}},
}

\bib{MR766221}{article}{
      author={Str\"{o}mberg, Jan-Olov},
      author={Wheeden, Richard~L.},
       title={Fractional integrals on weighted {$H^p$} and {$L^p$} spaces},
        date={1985},
        ISSN={0002-9947},
     journal={Trans. Amer. Math. Soc.},
      volume={287},
      number={1},
       pages={293\ndash 321},
         url={https://doi.org/10.2307/2000412},
      review={\MR{766221}},
}

\bib{MR1320508}{book}{
      author={Weisz, Ferenc},
       title={Martingale {H}ardy spaces and their applications in {F}ourier
  analysis},
      series={Lecture Notes in Mathematics},
   publisher={Springer-Verlag, Berlin},
        date={1994},
      volume={1568},
        ISBN={3-540-57623-1},
         url={https://doi.org/10.1007/BFb0073448},
      review={\MR{1320508}},
}

\bib{MR883661}{article}{
      author={Wilson, J.~Michael},
       title={Weighted inequalities for the dyadic square function without
  dyadic {$A_\infty$}},
        date={1987},
        ISSN={0012-7094},
     journal={Duke Math. J.},
      volume={55},
      number={1},
       pages={19\ndash 50},
         url={https://doi.org/10.1215/S0012-7094-87-05502-5},
      review={\MR{883661}},
}

\bib{MR3895312}{article}{
      author={Xie, Guangheng},
      author={Weisz, Ferenc},
      author={Yang, Dachun},
      author={Jiao, Yong},
       title={New martingale inequalities and applications to {F}ourier
  analysis},
        date={2019},
        ISSN={0362-546X},
     journal={Nonlinear Anal.},
      volume={182},
       pages={143\ndash 192},
         url={https://doi.org/10.1016/j.na.2018.12.011},
      review={\MR{3895312}},
}

\end{biblist}
\end{bibdiv}

\end{document}